\newtheorem{thm}{Theorem}[section]
\newtheorem{theorem}[thm]{Theorem}
\newtheorem{lemma}[thm]{Lemma}
\newtheorem{proposition}[thm]{Proposition}
\newtheorem{corollary}[thm]{Corollary}
\newtheorem{example}[thm]{Example}
\newtheorem{remark}[thm]{Remark}
\newtheorem{defn}[thm]{Definition}
\numberwithin{equation}{section}
\newcommand{\R}{{\mathbb{R}}}
\title{A reverse isoperimetric inequality for planar $(\alpha,\beta)-$convex bodies}
\author[Croce]{Gisella Croce}
\author[Fattah]{Zakaria Fattah}
\author[Pisante]{Giovanni Pisante}
\address[G. Croce]{Normandie Univ, UNIHAVRE, LMAH, FR-CNRS-3335, 76600 Le Havre, France}
\email{gisella.croce@univ-lehavre.fr}
\address[Z. Fattah]{Mathematics and Computer Science Department, ENSAM of MeknÃšs, University of Moulay Ismail,
Marjane II, AL Mansour, B.P 15290, 50050 Meknes, Morocco}
\email{z.fattah@edu.umi.ac.ma}
\address[G. Pisante]{Dipartimento di Matematica e Fisica, Universit\`a degli Studi dell Campania "Luigi Vanvitelli", Viale Lincoln 5, 81100 Caserta, Italy}
\email{giovanni.pisante@unicampania.it}
\begin{document}

\maketitle

\begin{abstract}
In this paper, we study a reverse isoperimetric inequality for planar convex bodies whose radius of curvature is between two positive numbers $0\leq \alpha <\beta$, called $(\alpha,\beta)-$convex bodies. We show that among planar $(\alpha,\beta)-$convex bodies of fixed perimeter, the extremal shape is a domain whose boundary is composed by two arcs of circles of radius $\alpha$ joined by two arcs of circles of radius $\beta$.
\end{abstract}

\section{{Introduction}}
The planar version of the isoperimetric inequality  states that  the circle encloses the maximal area among all curves of the same length. 
In this paper we deal with the reverse problem of finding, among the convex curves of given length and satisfying a constraint on the curvature, the one enclosing the minimal area. One of the first results in this framework has been obtained by A. Borisenko and K. Drach in \cite{Borisenko_2014}. They proved that for any the class of convex set $K$ whose curvature $k$ satisfies $k\geq \lambda >0$ the  inequality
\begin{equation}
    \label{BorDr}
    A(K)\geq \frac{P(K)}{2\lambda}-\frac{1}{\lambda^2}\sin\left(\frac{P(K)\lambda}{2}\right)
\end{equation}
holds, where $A(K)$ and $P(K)$ denote the area and the perimeter of the set $K$ respectively. Moreover they studied the equality case proving that the equality is satisfied if and only if $K$ is the intersection of two disks of radius $\lambda$, called $\lambda$-lune. On the other side R. Chernov, K. Drach and K. Tatarko in \cite{CHERNOV2019431} proved the reverse isoperimetric inequality for convex sets with the curvature satisfying $k \le \lambda < \infty$ showing that 
\begin{equation}
    \label{CerTat}
A(K)\geq  \frac{P(K)}{\lambda}-\frac{\pi}{\lambda^2}.
\end{equation}
Also in this case the sets satisfying the equality in the previous inequality has been characterized and shown to be the convex hull of two balls of radius $\frac{1}{\lambda}$.

In this paper we generalize the inequality \eqref{BorDr} to the class of convex sets with curvature satisfying $\alpha \leq k \leq \beta$ for given constants $0 \leq \alpha < \beta$, by proving that 
\begin{equation}
    \label{Isop-new}
A (K)\geq \frac 12 (\beta+\alpha)(P(K)-2\pi \alpha)+\pi \alpha^2- (\beta-\alpha)^2 \sin\left(\frac{P(K)-2\pi \alpha}{ 2(\beta-\alpha)}\right).
\end{equation}
Moreover the equality holds if and only if the boundary of $K$ is composed by two arcs of circles of radius $\alpha$ joined by two arcs of circles of radius $\beta$. We explicitly observe that formally for $\beta \to +\infty$ one gets the inequality \eqref{CerTat}.
 
The idea of the proof stems from the argument used in \cite{Borisenko_2014} based on the Pontryagin Maximum Principle to derive the optimality conditions satisfied by a constrained minimizer of the area. Then we exploit these  conditions in an analytical way to establish the result, as opposed to the more geometrical analysis done in \cite{Borisenko_2014}.

It is worth to note that imposing some constraints on the class of all admissible curves, is necessary, since otherwise the problem can be easily proved to be ill-posed.
In the literature different reverse isoperimetric problems have been considered (see for instance  \cite{MR1136445, Zhang_2019, MR3078385, blatt2020reverse, paoli2021reverse, MR4234702, Howard_1995,Kostia} for different approaches and different classes of admissible sets).

\section{Preliminaries}
\label{SEC:prelim}

With the notation $\mathfrak{S}(A,B)$, where $A$ and $B$ are two sets and $\mathfrak{S}$ is one of the standard symbol for a functional space (such as $C^{0,1}$ for Lipschitz functions, $W^{1,1}$ for absolutely continuous, etc.), we mean the space of maps defined on $A$ with values in $B$. Moreover for $I=[a,b]\subset \mathbb{R}$ with $\mathfrak{S}_{per}(I,B)$ we denote the $(b-a)$-periodic functions in $\mathfrak{S}_{per}(\mathbb{R},B)$.
 
\subsection{Convex bodies and $(\alpha,\beta)$-convexity}

In this section we recall some basic properties of convex sets in euclidean spaces, we introduce the class of competitors for our optimisation problem and prove some useful properties such as regularity and  compactness. 

Throughout the paper we will denote with $B_{r}$ the closed ball with radius $r>0$ centred in the origin.
By a convex body we shall mean a compact convex set $K\subset \R^{n}$ with non-empty interior. 
With $\mathcal{K}^{n}$ we will denote the class of convex bodies in $\mathbb{R}^{n}$. 
The {\it support function} of $K$ is the real-valued function defined on the unit sphere $\mathbb{S}^{n-1} $ by
\[
h_{K}(v):=\max_{k \in K} \langle k, v \rangle ,\quad v \in \mathbb{S}^{n-1}.
\]
We recall that the support function $h_{K}$ characterises the set $K$ and any function $h\;:\; \mathbb{S}^{n-1} \to \mathbb{R}$, such that its $1$-homogeneous extension is convex, is the support function of a convex body (cf. \cite[\S 1.7]{MR3155183}). Moreover $K\in \mathbb{R}^{n}$ is strictly convex if and only if  the $1$-homogeneous extension of its support function belongs to $C^{1}(\mathbb{R}^{n}\setminus \{0\})$ (see \cite[Cor. 1.7.3 \& \S 2.5]{MR3155183}).

A convenient way to endow $\mathcal{K}^{n}$ with a topology is to use the Hausdorff distance between two non-empty compact sets, denoted by $d^{H}(\cdot, \cdot)$ (cf. \cite[\S 1.8]{MR3155183}).  Indeed, we recall that the perimeter and the area functionals are continuous with respect to the Hausdorff topology on $\mathcal{K}^{n}$ (see also \cite[Thm.23 and Thm. 26]{MR2428231}).  By \cite[Lem. 1.8.14]{MR3155183}, given $K,M\in \mathcal{K}^{n}$, we can characterize the Hausdorff distance of $K$ from $M$ in terms of their support functions:
\begin{equation}\label{dist_Hausdorff_supp_functions}
d^{H}(K,M)=\left \| h_K-h_M \right\|_{L^{\infty}(\mathbb{S}^{n-1})}.
\end{equation}
Moreover, by the {\it Blaschke selection theorem} (cf. \cite[Theorem 1.8.7]{MR3155183}), every bounded sequence of convex bodies has a subsequence that converges to a convex body in the Hausdorff topology.

Following \cite[pag. 157]{MR3155183} we say that the convex body $L$ is {\it locally embeddable} in the convex body $K$ if for each point $x\in \partial K$ there are a point $y\in L$ and a neighbourhood $U$ of $y$ such that
\[
(L\cap U) +x-y \subset K.
\]

The concept of local embeddability when $K$ or $L$ is a ball has been studied and used in several contexts (cf. Remark \ref{rem-equivdef} below). The following lemmata provide two classical regularity properties related to this concept.

\begin{lemma}\label{Regularity-A}
If a convex body $K$ is locally embeddable in a ball, then its support function is of class $C^{1,1}$.
\end{lemma}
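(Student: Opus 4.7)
The plan is to translate the local embeddability hypothesis into a pointwise exterior ball condition at every outer normal direction of $\partial K$, then use it first to obtain strict convexity and differentiability of $h_{K}$, and finally to derive a Lipschitz bound on $\nabla h_{K}$ that gives $C^{1,1}$ regularity.

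To unpack the hypothesis, let $r>0$ be the radius of the ball in which $K$ is locally embeddable. Given $\nu\in\mathbb{S}^{n-1}$, take $x=r\nu\in\partial B_{r}$; the hypothesis produces $y_{\nu}\in K$ and a neighbourhood $U_{\nu}$ of $y_{\nu}$ with $(K\cap U_{\nu})+(r\nu-y_{\nu})\subset B_{r}$. Since $r\nu$ is a boundary point of $B_{r}$ with unique outward normal $\nu$, the point $y_{\nu}$ must belong to $\partial K$ with $\nu$ in its normal cone, and the inclusion rewrites as
\[
K\cap U_{\nu}\;\subset\;\overline{B}(y_{\nu}-r\nu,\,r).
\]
Thus $K$ sits locally inside a closed ball of radius $r$ tangent to $\partial K$ at $y_{\nu}$ from outside. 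I then claim $y_{\nu}$ is the unique maximizer of $\langle\cdot,\nu\rangle$ on $K$: if $y'\in K$ also satisfies $\langle y',\nu\rangle=h_{K}(\nu)$, then by convexity the segment $[y_{\nu},y']$ lies in the exposed face of $K$ at $\nu$ and is orthogonal to $\nu$, so for small $t>0$ the point $p_{t}=y_{\nu}+t(y'-y_{\nu})$ belongs to $K\cap U_{\nu}$, and
\[
r^{2}\;\ge\;|p_{t}-(y_{\nu}-r\nu)|^{2}\;=\;t^{2}|y'-y_{\nu}|^{2}+r^{2}
\]
forces $y'=y_{\nu}$. Hence $\partial h_{K}(\nu)=\{y_{\nu}\}$ and $h_{K}$ is differentiable on $\mathbb{R}^{n}\setminus\{0\}$ with $\nabla h_{K}(\nu)=y_{\nu}$.

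For the Lipschitz estimate, take $\nu_{1},\nu_{2}\in\mathbb{S}^{n-1}$ close enough that $y_{\nu_{1}}\in U_{\nu_{2}}$ and $y_{\nu_{2}}\in U_{\nu_{1}}$. Applying the previous inclusion twice yields $|y_{\nu_{2}}-(y_{\nu_{1}}-r\nu_{1})|\le r$ and $|y_{\nu_{1}}-(y_{\nu_{2}}-r\nu_{2})|\le r$. Expanding both squared inequalities cancels the $r^{2}$ terms, and summing gives
\[
2\,|y_{\nu_{1}}-y_{\nu_{2}}|^{2}\;\le\;2r\,\langle y_{\nu_{1}}-y_{\nu_{2}},\,\nu_{1}-\nu_{2}\rangle\;\le\;2r\,|y_{\nu_{1}}-y_{\nu_{2}}|\,|\nu_{1}-\nu_{2}|,
\]
so $|\nabla h_{K}(\nu_{1})-\nabla h_{K}(\nu_{2})|\le r\,|\nu_{1}-\nu_{2}|$ for nearby directions. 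Continuity of $\nabla h_{K}$ (from the previous step and convexity of $h_{K}$) combined with compactness of $\mathbb{S}^{n-1}$ upgrades this to a global Lipschitz estimate, and the $1$-homogeneous extension then yields $h_{K}\in C^{1,1}(\mathbb{R}^{n}\setminus\{0\})$.

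The main obstacle I expect lies in the unpacking step: the hypothesis only supplies local neighbourhoods $U_{\nu}$, not a uniform exterior ball, so care is needed both in ruling out flat faces from purely local information and in patching the local Lipschitz estimate into a global one via continuity and compactness. Once the exterior ball condition is cleanly in place, the remaining quadratic manipulation is essentially automatic.
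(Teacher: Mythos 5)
Your route is genuinely different from the paper's. The paper's proof is essentially a chain of citations: local embeddability of $K$ in a ball $B$ gives strict convexity, then Schneider's summand theorem (\cite[Thm.\ 3.2.3]{MR3155183}) yields $B=K+M$, i.e.\ $K$ slides freely inside $B$, and Howard's characterization identifies that property with $h_K\in C^{1,1}$. You instead prove everything by hand: the local inclusion at the boundary point $r\nu$ of $B_r$ gives the exterior tangent ball $K\cap U_\nu\subset\overline{B}(y_\nu-r\nu,r)$, the quadratic expansion kills flat faces (so $h_K$ is differentiable with $\nabla h_K(\nu)=y_\nu$), and a second quadratic expansion gives the Lipschitz bound on the inverse Gauss map. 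This is a self-contained and more elementary argument, at the price of having to do the local-to-global bookkeeping yourself; the first two steps (identifying $y_\nu$ as the unique maximizer of $\langle\cdot,\nu\rangle$, hence $\partial h_K(\nu)=\{y_\nu\}$ and continuity of $\nabla h_K$ by convexity) are correct as written.

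There is, however, one step that does not work as stated. For the Lipschitz estimate you require the \emph{symmetric} condition $y_{\nu_1}\in U_{\nu_2}$ and $y_{\nu_2}\in U_{\nu_1}$ for ``close enough'' $\nu_1,\nu_2$. The neighbourhoods $U_\nu$ come from the hypothesis with no uniform lower bound on their size, so while continuity of $\nu\mapsto y_\nu$ guarantees $y_{\nu_2}\in U_{\nu_1}$ once $\nu_2$ is close to a \emph{fixed} $\nu_1$, the reverse inclusion $y_{\nu_1}\in U_{\nu_2}$ needs $|y_{\nu_1}-y_{\nu_2}|$ smaller than the inradius of $U_{\nu_2}$, which may degenerate as $\nu_2\to\nu_1$; nothing rules out that the symmetric condition fails for every pair of distinct normals. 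The fix is cheap: the one-sided inclusion already gives $|y_{\nu_1}-y_{\nu_2}|^2\le 2r\,\langle y_{\nu_1}-y_{\nu_2},\nu_1\rangle$, and since $y_{\nu_i}$ maximizes $\langle\cdot,\nu_i\rangle$ on $K$ you have $\langle y_{\nu_1}-y_{\nu_2},\nu_1\rangle\ge 0\ge\langle y_{\nu_1}-y_{\nu_2},\nu_2\rangle$, whence $\langle y_{\nu_1}-y_{\nu_2},\nu_1\rangle\le\langle y_{\nu_1}-y_{\nu_2},\nu_1-\nu_2\rangle$ and $|y_{\nu_1}-y_{\nu_2}|\le 2r|\nu_1-\nu_2|$ for all $\nu_2$ near $\nu_1$ (the factor $2$ is harmless). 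Finally, the globalization is not a consequence of compactness alone: what you need is the standard lemma that a \emph{continuous} map on a (geodesically) connected set satisfying a pointwise Lipschitz bound with a uniform constant at every point is globally Lipschitz (chain the estimate along paths, or use the Dini-derivative monotonicity lemma on $t\mapsto\langle y_{\gamma(t)},e\rangle-2rt$ for unit-speed paths $\gamma$). With these two repairs the argument is complete.
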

\begin{proof}
From the local embeddability of $K$ in a ball, say $B$, it follows that $K$ is a strictly convex body. Therefore by \cite[Theorem 3.2.3]{MR3155183}, there exists a convex body $M\in \mathcal{K}^{n}$ such that $B=K+M$ (i.e. $K$ is a {\it summand} of $B$) which is equivalent to say that $K$ {\it slides freely inside} $B$ (cf. \cite[Theorem 3.2.2]{MR3155183}). The result follows by the characterization of convex bodies with support function of class $C^{1,1}$ (cf. \cite[Proposition 2.3]{Howard_2006}).
\end{proof}
\begin{lemma}\label{Regularity-B}
Let $K$ be a convex body. If a ball is locally embeddable in $K$, then its boundary $\partial K$ is of class $C^{1,1}$.
\end{lemma}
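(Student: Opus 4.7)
The plan is to run the argument dually to the one used for Lemma \ref{Regularity-A}. The hypothesis that a ball is locally embeddable in $K$ is, by definition, a uniform interior ball condition, and the idea is to upgrade it first to a global ``free rolling'' condition via a Minkowski decomposition, and then to translate that geometric condition into $C^{1,1}$ regularity of $\partial K$ via the standard characterization.

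First I would fix a ball $B = B_r$ that is locally embeddable in $K$ and observe that the local embeddability yields that $K$ has the interior ball property of radius $r$ at every boundary point. Next I would apply the same type of Minkowski summand result used in Lemma \ref{Regularity-A}, namely \cite[Theorem 3.2.3]{MR3155183}, but with the roles of the two bodies exchanged: since $B$ is locally embeddable in $K$, there exists a convex body $M\in\mathcal{K}^{n}$ such that
\[
K = B + M,
\]
so that $B$ slides freely inside $K$ in the sense of \cite[Theorem 3.2.2]{MR3155183}.

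Finally I would invoke the dual to the regularity characterization cited in the proof of Lemma \ref{Regularity-A}: the classical fact (cf.\ \cite[Proposition 2.4]{Howard_2006}, the natural counterpart of the Proposition 2.3 used above) that a convex body $K$ has boundary of class $C^{1,1}$ if and only if a ball slides freely inside it, or equivalently $K$ admits a Minkowski decomposition $K=B+M$ with $B$ a ball. Applying this to the decomposition obtained in the previous step concludes that $\partial K \in C^{1,1}$.

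The main obstacle is essentially bookkeeping: matching the hypothesis ``locally embeddable'' with the summand/free-rolling condition, and making sure to cite the correct regularity statement, since the available references phrase the equivalence between $C^{1,1}$ boundary and the rolling ball condition in several slightly different forms. The geometric content is light, and no extra PDE or variational input is needed beyond what was already used for Lemma \ref{Regularity-A}.
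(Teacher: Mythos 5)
Your proposal is correct and follows essentially the same route as the paper: apply \cite[Theorem 3.2.3]{MR3155183} (using that the ball is strictly convex) to upgrade local embeddability of $B$ in $K$ to the global Minkowski decomposition $K=B+M$, and then invoke the standard equivalence between a ball being a summand of $K$ and $\partial K$ being $C^{1,1}$ (the paper cites \cite[Proposition 2.4.3]{MR2311920} for this last step). The only differences are cosmetic: the intermediate ``slides freely'' reformulation and the particular reference chosen for the final characterization.
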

\begin{proof}
Let $B$ a ball locally embeddable in $K$. Since $B$ is strictly convex, by \cite[Theorem 3.2.3]{MR3155183} there exists a convex body $M\in \mathcal{K}^{n}$ such that $K=B+M$ (i.e. $B$ is a {\it summand} of $K$), that is equivalent to $\partial K$ being of class $C^{1,1}$ (cf. for example \cite[Proposition 2.4.3]{MR2311920}).
\end{proof}
We are now ready to give the notion of curvature that we are going to use for our result:
\begin{defn}
Let $\alpha$ and $\beta$ be two real numbers with $0<\alpha<\beta$. We say that a convex body $K$ is {\it $(\alpha,\beta)-$convex} if $K$ is locally embeddable in $B_{\beta}$ and $B_{\alpha}$ is locally embeddable in $K$.
In the case $\alpha=0$, we will assume that $K$ is locally embeddable in $B_{\beta}$.
\end{defn}
\begin{remark}
Note that if $\alpha =0$ the class of $(\alpha,\beta)-$convex sets is nothing but the family of $\beta-$convex sets introduced in \cite{Borisenko_2014}. From the previous lemmata it follows that for $\alpha>0$,  an $(\alpha,\beta)-$convex body is a $C^{1,1}$ strictly convex set with support function of class $C^{1,1}$. In the case $\alpha=0$, we can only say that the support function of its boundary is $C^{1,1}$.
\end{remark}

\begin{remark}\label{rem-equivdef}
Let $\alpha>0$.
An equivalent definition of $(\alpha,\beta)-$convexity can be given in terms of the following notions of $\beta-$convexity and $\alpha-$concavity. A convex body $K$ is said to be {\it $\alpha-$concave} if the ball of radius $1/\alpha$ is locally embeddable in $K$ (cf. \cite[Definition 1.2]{CHERNOV2019431}). While $K$ is said to be {\it $\beta-$convex} if for each point $y \in \partial K $, there exist a point $y\in \partial B_{\beta}$ and a neighbourhood $U$ of $x$ such that $(K\cap U)+y-x\subset B_{\beta}$. As a matter of fact, as a consequence of \cite[Theorem 1.9]{MR880294}, a convex body $K$ is $\beta-$convex if and only if $K$ is locally embeddable in $B_{\beta}$. Therefore $K$ is $(\alpha,\beta)-$convex if it is at the same time $\beta-$convex and $1/\alpha-$concave.
\end{remark}

We will mostly work in the two dimensional setting, dealing with planar convex sets, therefore we recall some useful preliminaries results on planar convex geometry. First we note that it is often convenient to work with the so called {\it parametric support function}, i.e. $p_{K}(t):=h_{K} \circ \boldsymbol\sigma(t)$ where $\boldsymbol\sigma(t)=(\cos(t),\sin(t))$ and $t\in [0, 2\pi]$ (note that this is how the support function of a planar convex body is defined in classical literature, cf. for example \cite{MR2162874} and \cite{MR0170264}). In the next propositions we recall some well known and useful properties related to the parametric support function of a planar convex body (see also \cite{MR3113290} for a recent survey on the subject). We remark that similar results holds under weaker assumption regularity assumptions (cf. \cite{MR1242973}, \cite{MR3155183}) but we restrict our attention to what will be sufficient for our purposes.

\begin{proposition}
\label{Properties-01}
Let $K\in \mathcal{K}^{2}$ be a strictly convex planar body  and $p_{K}$ its parametrized support function. Assume that  $p_{K} \in C^{1,1}_{per}(0,2\pi)$. Then the radius of curvature of the boundary $\partial K$, $\rho_{K}(t)$, satisfies for a.e. $t\in (0, 2\pi)$ the equation
\begin{equation}
\label{EQ-curvature}
\rho_{K}(t)= p_{K}(t)+ p_{K}''(t) \geq 0.
\end{equation}
Viceversa, if $h \in  C^{1,1}_{per}(0,2\pi)$ is a function satisfying \eqref{EQ-curvature}, then there exists a convex body $K$ such that $h$ is its parametric support function.
\end{proposition}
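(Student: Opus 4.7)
My plan is to treat the two directions separately, relying on the classical outer-normal parametrization of $\partial K$ and on the correspondence between support functions and $1$-homogeneous convex functions recalled in Section~\ref{SEC:prelim}.

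For the forward direction, strict convexity of $K$ combined with $p_K\in C^{1,1}_{per}(0,2\pi)$ guarantees that each boundary point of $K$ is uniquely determined by its outer normal, and the classical envelope argument (intersecting two infinitesimally close supporting lines) gives the parametrization
\[
x(t)=p_K(t)\,\boldsymbol\sigma(t)+p_K'(t)\,\boldsymbol\sigma'(t),
\]
well defined since $p_K'$ is absolutely continuous. A direct differentiation, using $\boldsymbol\sigma''(t)=-\boldsymbol\sigma(t)$, collapses two terms and yields $x'(t)=(p_K(t)+p_K''(t))\,\boldsymbol\sigma'(t)$ for a.e.\ $t$. Since the tangent direction is $\boldsymbol\sigma'(t)$, whose angle with respect to a fixed axis is $t+\pi/2$, the classical formula $\rho_K=|x'|/|\phi'|$ yields $\rho_K(t)=p_K(t)+p_K''(t)$. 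Non-negativity then follows from convexity: a sign change of $p_K+p_K''$ on a set of positive measure would force $x$ to reverse orientation, contradicting the monotonicity of the outer normal map along $\partial K$.

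For the converse, I would pass to the positively $1$-homogeneous extension $H\colon\R^2\setminus\{0\}\to\R$, $H(r\cos t, r\sin t):=r\,h(t)$, and verify its convexity; the characterization recalled in Section~\ref{SEC:prelim} then immediately produces a convex body $K$ with support function $H$, whose parametric support function is $p_K(t)=H(\boldsymbol\sigma(t))=h(t)$. For the convexity check, I would compute the Hessian of $H$ in the orthonormal polar frame $(\partial_r, r^{-1}\partial_t)$: $1$-homogeneity forces $H_{rr}=0$, a short computation shows $H_{rt}=0$, and the tangential entry works out to $H_{tt}=(h(t)+h''(t))/r$, so the hypothesis $h+h''\geq 0$ translates precisely into a.e.\ positive semi-definiteness of $D^2H$.

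The principal obstacle I foresee is exactly the passage from this a.e.\ infinitesimal condition to the global convexity of $H$, which is only $C^{1,1}_{\mathrm{loc}}$. I plan to handle it by approximating $h$ by smooth periodic $h_\varepsilon$ through convolution with a non-negative mollifier: the identity $h_\varepsilon+h_\varepsilon''=(h+h'')\ast\eta_\varepsilon$ preserves the sign condition, the smooth extensions $H_\varepsilon$ are then convex by the standard $C^2$ case, and uniform-on-compacta convergence $H_\varepsilon\to H$ transfers convexity to the limit.
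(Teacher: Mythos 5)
The paper does not actually prove Proposition \ref{Properties-01}: it is recalled as a classical fact, with the references \cite{MR2162874}, \cite{MR0170264}, \cite{MR3113290}, \cite{MR3155183} standing in for a proof. Your argument is precisely the standard one from those sources (the envelope/inverse-Gauss-map parametrization $x(t)=p_K(t)\boldsymbol\sigma(t)+p_K'(t)\boldsymbol\sigma'(t)$, the identity $x'=(p_K+p_K'')\boldsymbol\sigma'$, and the Hessian of the $1$-homogeneous extension in the orthonormal polar frame), and it is essentially sound; in particular your covariant Hessian computation giving the single nontrivial entry $(h+h'')/r$ and the mollification step $h_\varepsilon+h_\varepsilon''=(h+h'')\ast\eta_\varepsilon\geq 0$ are correct. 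Two points deserve tightening. First, in the forward direction the non-negativity of $p_K+p_K''$ is cleaner if derived directly from the convexity of the $1$-homogeneous extension $H$ (second difference quotients of $H$ along the unit circle give $p_K+p_K''\geq 0$ distributionally, hence a.e.\ for a $C^{1,1}$ function) rather than from the somewhat informal ``orientation reversal'' argument. Second, and more substantively, in the converse the positive semi-definiteness of $D^2H$ on $\mathbb{R}^2\setminus\{0\}$ does not by itself yield convexity of $H$ on $\mathbb{R}^2$, because $\mathbb{R}^2\setminus\{0\}$ is not convex: you must also check convexity along lines through the origin, which amounts to the non-negativity of the width $h(t)+h(t+\pi)$. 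This does follow from the hypothesis via the representation
\begin{equation*}
h(t)+h(t+\pi)=\int_{t}^{t+\pi}\sin(s-t)\,\bigl(h(s)+h''(s)\bigr)\,ds\;\geq\;0,
\end{equation*}
but it is a genuine extra step that your appeal to ``the standard $C^2$ case'' silently absorbs; you should either include it or cite \cite[\S 1.7]{MR3155183} for the equivalence between $h+h''\geq 0$ (as a measure) and $h$ being a planar support function.
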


\begin{proposition}
\label{Properties-02}
Under the same assumptions of Proposition \ref{Properties-01} the boundary $\partial K$ can be parametrized by 
\[
\begin{cases}
x(t) = p_{K}(t) \cos(t) - p_{K}'(t) \sin(t)  \\
y(t) = p_{K}(t) \sin(t) + p_{K}'(t) \cos(t)
\end{cases}
\]
Moreover the perimeter and the area of $K$ can be computed by the following formulae 
\[
P(K)= \int_{0}^{2\pi} \left( p_{K}(t)+ p_{K}''(t) \right) dt =  \int_{0}^{2\pi} \rho_{K}(t) dt ,
 \]  
\[
A(K)= \frac{1}{2}\int_{0}^{2\pi} \left( p_{K}(t)+ p_{K}''(t) \right)  p_{K}(t)\, dt = \frac{1}{2}\int_{0}^{2\pi} \rho_{K}(t) \, p_{K}(t)\, dt.
\]  
\end{proposition}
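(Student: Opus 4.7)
The plan is to first establish the explicit parametrization of $\partial K$ and then derive the perimeter and area formulae as direct consequences. For the parametrization, I would exploit that for every $t\in[0,2\pi]$, the line $\{(x,y):x\cos t+y\sin t=p_{K}(t)\}$ is the supporting line of $K$ with outward unit normal $\boldsymbol\sigma(t)$. Because $K$ is strictly convex with $C^{1}$ support function, this line meets $\partial K$ in exactly one point, which we call $(x(t),y(t))$. This gives the identity
\[
x(t)\cos t+y(t)\sin t=p_{K}(t).
\]
Differentiating in $t$ and using that the tangent to $\partial K$ at $(x(t),y(t))$ is orthogonal to $\boldsymbol\sigma(t)$, i.e.\ $x'(t)\cos t+y'(t)\sin t=0$, one obtains the auxiliary equation
\[
-x(t)\sin t+y(t)\cos t=p_{K}'(t).
\]
Solving the resulting $2\times 2$ linear system (its determinant is $\cos^{2}t+\sin^{2}t=1$) yields at once the desired expressions for $x(t)$ and $y(t)$.

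For the integral formulae, I would differentiate the parametrization once more. A direct computation gives
\[
x'(t)=-(p_{K}(t)+p_{K}''(t))\sin t,\qquad y'(t)=(p_{K}(t)+p_{K}''(t))\cos t,
\]
so that the speed is exactly $|(x'(t),y'(t))|=p_{K}(t)+p_{K}''(t)=\rho_{K}(t)$, which is non-negative thanks to Proposition \ref{Properties-01}. Integrating over $[0,2\pi]$ produces the perimeter formula. For the area, I would apply the planar Green's formula
\[
A(K)=\frac12\int_{0}^{2\pi}\bigl(x(t)y'(t)-y(t)x'(t)\bigr)\,dt,
\]
plug in the expressions for $x,y,x',y'$, and simplify: the cross terms $p_{K}(t)p_{K}'(t)\sin t\cos t$ cancel and the remaining terms combine via $\cos^{2}t+\sin^{2}t=1$ into the integrand $\rho_{K}(t)\,p_{K}(t)$.

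The main subtlety to address is the low regularity. Since $p_{K}\in C^{1,1}_{per}(0,2\pi)$, the second derivative $p_{K}''$ exists only almost everywhere, so $\rho_{K}$ is only in $L^{\infty}_{per}(0,2\pi)$; however, $p_{K}\in W^{2,\infty}_{per}$ ensures that the parametrization $(x,y)$ belongs to $W^{1,\infty}_{per}$, the arc-length and Green's formulae remain valid in this Sobolev setting, and all the differentiations above can be carried out in the distributional (a.e.) sense. One finally needs to justify that $t\mapsto(x(t),y(t))$ traces $\partial K$ once: strict convexity ensures that distinct outward unit normals correspond to distinct boundary points, while the $C^{1}$ regularity of the support function guarantees surjectivity of the Gauss map, so $t\mapsto(x(t),y(t))$ is a (weakly) monotone parametrization of $\partial K$ with period $2\pi$.
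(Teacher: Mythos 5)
The paper states this proposition without proof, deferring to the classical literature (Santal\'o, Valentine, Schneider); your argument is precisely the standard one found there, and its substance is correct: the two linear relations determine the contact point, the derivative computation gives speed $\rho_K=p_K+p_K''\ge 0$, and Green's formula yields the area after the cross terms cancel. The only step I would tighten is the derivation of the second relation $-x(t)\sin t+y(t)\cos t=p_K'(t)$: as written, you differentiate the identity $x(t)\cos t+y(t)\sin t=p_K(t)$ in $t$ and invoke orthogonality of the tangent of $\partial K$ to $\boldsymbol\sigma(t)$, which presupposes that $t\mapsto(x(t),y(t))$ is differentiable --- the very regularity you are in the process of establishing. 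The clean fix, consistent with the facts recalled in Section \ref{SEC:prelim}, is to note that strict convexity makes the $1$-homogeneous extension of $h_K$ of class $C^1$ off the origin, so the unique contact point of the supporting line with normal $\boldsymbol\sigma(t)$ is $\nabla h_K(\boldsymbol\sigma(t))$; writing this gradient via $1$-homogeneity and the chain rule gives directly $(x(t),y(t))=p_K(t)\boldsymbol\sigma(t)+p_K'(t)\boldsymbol\sigma'(t)$, which is the claimed parametrization. With that substitution your a.e.\ differentiation in the $W^{2,\infty}$ setting, the arc-length formula, and the Green's formula computation all go through as you describe.
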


The $(\alpha,\beta)$-convexity for planar domains can be expressed in terms of parametrized support function. Indeed a convex body $K\in \mathcal{K}^2$ is $(\alpha,\beta)$-convex if and only if 
its parametrized support function $p_K$ satisfies the inequalities
\[
\alpha \leq p_{K}(t)+ p_{K}''(t) \leq \beta \quad \textnormal{a.e. in}\, (0,2\pi). 
\]
It easily follows, by Proposition \ref{Properties-02}, that the perimeter of any $(\alpha,\beta)$-convex body in the plane  satisfies $ 2\pi \alpha \leq P(K)\leq 2\pi \beta$. 
Moreover this characterization allows us to prove that for planar domains the $(\alpha,\beta)$-convexity is preserved by Hausdorff convergence.
 \begin{lemma}\label{lemma-continuity}
 Let $0\leq \alpha<\beta<\infty$ and $\{K_n\}_{n \in \mathbb{N}} \subset \mathcal{K}^2$ be a sequence of $(\alpha,\beta)$-convex bodies. Assume that the sequence $K_n$ converges to $K$ in the Hausdorff
topology. Then $K$ is $(\alpha,\beta)$-convex.
 \end{lemma}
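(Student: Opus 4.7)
The plan is to translate the $(\alpha,\beta)$-convexity into the analytic characterization just recorded — namely $\alpha \leq p_K + p_K'' \leq \beta$ a.e. on $(0,2\pi)$ — and then pass to the limit at the level of parametric support functions. The first reduction is routine: by \eqref{dist_Hausdorff_supp_functions}, the Hausdorff convergence $K_n \to K$ is equivalent to the uniform convergence $h_{K_n} \to h_K$ on $\mathbb{S}^1$, hence $p_{K_n} \to p_K$ uniformly on $[0,2\pi]$. Note that, being the uniform limit of $2\pi$-periodic continuous functions, $p_K$ is itself continuous and $2\pi$-periodic.

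Next I would pass the a.e. bound $\alpha \leq p_{K_n} + p_{K_n}'' \leq \beta$ to the limit distributionally. Take any nonnegative $\varphi \in C^{\infty}_{per}(0,2\pi)$. Since $p_{K_n}\in C^{1,1}_{per}$ one can integrate by parts using periodicity, so that
\[
\alpha \int_0^{2\pi} \varphi\, dt \;\leq\; \int_0^{2\pi} \bigl(p_{K_n}\,\varphi + p_{K_n}\,\varphi''\bigr)\,dt \;\leq\; \beta \int_0^{2\pi} \varphi\, dt.
\]
By uniform convergence of $p_{K_n}$ to $p_K$, both integrands converge in $L^1$, and therefore, letting $n\to\infty$ and integrating by parts back formally (interpreted distributionally),
\[
\alpha \int_0^{2\pi} \varphi\, dt \;\leq\; \langle p_K + p_K'' , \varphi\rangle \;\leq\; \beta \int_0^{2\pi} \varphi\, dt
\]
for every nonnegative periodic test function $\varphi$. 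This says precisely that the distribution $p_K + p_K''$ lies between $\alpha$ and $\beta$.

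Because $p_K$ is continuous and $p_K'' = (p_K + p_K'') - p_K$ is a distribution bounded above and below, one deduces that $p_K'' \in L^\infty_{per}(0,2\pi)$; equivalently $p_K \in W^{2,\infty}_{per}(0,2\pi) = C^{1,1}_{per}(0,2\pi)$, and the inequality
\[
\alpha \;\leq\; p_K(t) + p_K''(t) \;\leq\; \beta \qquad \text{a.e. in } (0,2\pi)
\]
holds pointwise almost everywhere. Since in particular $p_K + p_K'' \geq \alpha \geq 0$, Proposition~\ref{Properties-01} guarantees that $p_K$ is the parametric support function of some convex body; by uniqueness of the support function, that body is $K$, and the analytic characterization recalled right after Proposition~\ref{Properties-02} shows that $K$ is $(\alpha,\beta)$-convex.

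The main subtlety is the $\alpha=0$ case, where we claim no $C^{1,1}$ regularity of the boundary; however the whole argument above is carried out on the support function side, where the $C^{1,1}$ regularity of $p_K$ is still obtained from the bound $p_K''\in L^\infty$. Thus no separate treatment is needed, and the only genuinely nontrivial point is legitimizing the passage to the limit of the distributional second derivative, which is handled by the integration-by-parts trick made possible by periodicity and the uniform convergence of $p_{K_n}$.
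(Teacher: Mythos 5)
Your proof is correct, and while it rests on the same foundation as the paper's --- the characterization of $(\alpha,\beta)$-convexity by the differential inequality $\alpha\le p_K+p_K''\le\beta$ together with the identification \eqref{dist_Hausdorff_supp_functions} of Hausdorff convergence with uniform convergence of support functions --- your implementation of the limit passage is genuinely different and leaner. The paper first establishes a priori bounds on $p'_{K_n}$ (via the boundary parametrization of Proposition \ref{Properties-02}) and on $p''_{K_n}$ (from the two-sided inequality), then extracts weak-$*$ convergent subsequences in $W^{1,\infty}$, identifies the limits to conclude $p_K\in W^{2,\infty}$, and only then passes the inequality to the limit against test functions. You instead integrate by parts twice so that only $p_{K_n}$ itself appears under the integral, which lets the uniform convergence alone carry the limit; the $C^{1,1}$ regularity of $p_K$ then falls out a posteriori from the observation that a distribution squeezed between two constants is an $L^\infty$ function (a nonnegative distribution dominated by a multiple of Lebesgue measure is absolutely continuous with bounded density). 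This avoids the compactness and subsequence bookkeeping entirely, and in particular dispenses with the paper's separate bound on $p'_{K_n}$. The one step you state somewhat tersely is precisely that measure-theoretic fact about two-sided distributional bounds; it is standard, but in a written version you should spell it out, since it is what converts the distributional inequality into the pointwise a.e.\ inequality and the $W^{2,\infty}$ regularity simultaneously. Everything else --- periodicity justifying the vanishing of boundary terms, and the final appeal to Proposition \ref{Properties-01} --- is sound.
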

\begin{proof}
 Let ${p_{K_n}}$ be the parametric support function of $ K_{n}$ and $p_K$ the parametric support function of $K$. As $ K_{n}$ is $(\alpha,\beta)-$convex body then $p_{K_n}$ is of class $C^{1,1}$ and the radius of curvature of $\partial K_{n}$  exists almost everywhere. Therefore, the parametric support function satisfies
  \begin{equation}\label{LemmaMC}
   \alpha \le p_{K_n} +p''_{K_n} \le \beta  \quad \quad\mbox{a.e. in }  \quad
[0,2\pi].
  \end{equation}
  By formula \ref{dist_Hausdorff_supp_functions},  as $K_n$ converges to $K$ in the Hausdorff distance, $p_{K_n}$ converges to $p_K$ in $L^{\infty}([0,2\pi])$.
Since $p_{K_n}$ is bounded in $L^{\infty}([0,2\pi])$, we deduce from inequality (\ref{LemmaMC})
that $p''_{K_n}$ is bounded in $L^{\infty}([0,2\pi])$.

 By Proposition \ref{Properties-02}  
 $$
 p'_{K_n}(t)=-x_n(t)\sin(t)+y_{n}(t)\cos(t)\,,
 $$
 where $(x_n(t), y_n(t))$ is the 
parametrization of the boundary of $K_n$. 
As $(x_n(t), y_n(t)) \in \partial K_n$ and  all the $K_n$ are contained in a ball, then $p'_{K_n}$ 
is bounded in $L^{\infty}([0,2\pi])$. Therefore $p'_{K_n}$ is bounded in the Sobolev space $W^{1,\infty}([0,2\pi])$. By  the Rellich-Kondrachov theorem, there exists $w\in W^{1,\infty}([0,2\pi])$ such that, up to a subsequence,
$p'_{K} \overset{\ast}{\rightharpoonup} w$ in $W^{1,\infty}([0,2\pi])$ and $p'_{K_n} \rightarrow w$ in $L^{\infty}([0,2\pi])$.

Since $p_{K_n}$ is bounded in $W^{1,\infty}([0,2\pi])$, by using again the Rellich-Kondrachov  theorem, there exists $g\in W^{1,\infty}([0,2\pi])$ such that, up to a subsequence, $p_{K_n} \overset{\ast}{\rightharpoonup} g$ in $W^{1,\infty}([0,2\pi])$
and $p_{K_n} \rightarrow g$ in $L^{\infty}([0,2\pi])$.

Since $K_n\to K$ in the Hausdorff distance, the limit of  $p_{K_n}$ to $p_{K}$ in $L^{\infty}([0,2\pi])$ implies that  $g=p_K$, $w=g'$. Thus, we can extract a subsequence still denoted  $p'_{K_n}$ such that
 $p'_{K_n} \overset{\ast}{\rightharpoonup} p'_{K}$ in $W^{1,\infty}([0,2\pi])$.
 This implies that $p_{K} \in W^{2,\infty}([0,2\pi])$ which means $p_{K}$ is $C^{1,1}$.
As $p_{K}$ is the parametric support function of $K$, then $p_{K}$ and $p''_{K}$ are
$2\pi-$ periodic. Moreover, (\ref{LemmaMC}) implies
  $$
  \int \alpha \phi \le \int(p_{K_n}+p''_{K_n})\phi\le \int \beta \phi \quad
\mbox{for
all} \,\,  \phi \mbox{ smooth and  non negative}.
  $$
  Using the weak-${*}$ convergence in $W^{2,\infty}([0,2\pi])$, we have
$$
 \int \alpha \phi \le \int( p_{K}+p''_{K})\phi\le \int \beta \phi \quad \quad\mbox{for
all }  \,\phi  \mbox{ smooth and  non negative} .
  $$
  Thus, by the  Fundamental lemma in the  calculus of variations, we deduce that
$$   
\alpha \le p_{K} +p''_{K} \le \beta  \quad \quad\mbox{a.e. in }  [0,2\pi],
$$
that is, $K$ is $(\alpha,\beta)-$convex.
\end{proof}

We conclude this section with an example of a family of planar $(\alpha,\beta)$-convex bodies that will play an important role in the sequel.

\begin{example}[$(\alpha,\beta)$-eggs] 
\label{exa-eggs}
For $\alpha\geq 0$, an example of $(\alpha,\beta)$-convex bodies in $\mathcal{K}^{2}$ 
is the family of sets that we will call 
$(\alpha,\beta)-$eggs. 
They are symmetric with respect to 
the Cartesian axes,
their boundary is composed
by 4 arcs of circles with radii $\alpha$ and $\beta$ alternatively and their 
centers are chosen in such a way to ensure the regularity of $\partial K$ (see Lemma \ref{Regularity-B}).
Given $0 < \alpha < \beta < +\infty$ and $l \in (\alpha \pi, \beta \pi)$, the $(\alpha,\beta)$-egg, with perimeter $P=2l$,  can be parametrised as follows. We set $\tau= \frac{1}{2}\frac{l-\pi \alpha}{\beta -\alpha}$, $\kappa_{1}= (\beta-\alpha) \cos(\tau)$ and $\kappa_{2}= (\beta-\alpha) \sin(\tau)$. 
We note that $\tau \in \left(0, \frac{\pi}{2}\right)$ and $\kappa_{1}\cdot \kappa_{2} > 0$. We define the points $\mathbf{c}_{1}=(-\kappa_{1},0)$, $\mathbf{c}_{2}=(0,\kappa_{2})$,  $\mathbf{c}_{3}=(\kappa_{1},0)$, $\mathbf{c}_{4}=(0,-\kappa_{2})$. Then the boundary of the $(\alpha,\beta)$-egg is parametrised by 
\[
\boldsymbol\gamma(t)= \
\begin{cases}
\mathbf{c}_{1} + \beta \boldsymbol \sigma(t),  & t \in (- \tau, \tau) \\
\mathbf{c}_{2} + \alpha \boldsymbol \sigma(t),  & t \in (\tau, \pi - \tau) \\
\mathbf{c}_{3} + \beta \boldsymbol \sigma(t),  & t \in (\pi - \tau,\pi + \tau) \\
\mathbf{c}_{4} + \alpha \boldsymbol \sigma(t),  & t \in (\pi + \tau,\pi- \tau)
\end{cases}.
\]
Observe that in the case $\alpha=0$, the $(\alpha,\beta)$-egg set reduces to the $\beta$-lune defined in \cite{Borisenko_2014}. Indeed the arcs of radius $0$ corresponds to the corner points.
\end{example}

\begin{remark}\label{REM-consecArc}
An $(\alpha,\beta)$-egg is an example of convex set whose radius of curvature $\rho_{K}$ is piecewise constant and assumes alternatively the two values $\alpha$ and $\beta$. One could consider in general a wider class of planar $(\alpha,\beta)$-convex sets that satisfy this property. i.e. considering the class of sets whose boundary is a finite union of arcs of circles with radii $\alpha$ and $\beta$. When $\alpha >0$, due to the regularity of the boundary given by Lemma \ref{Regularity-B}, one can easily infer that two consecutive arcs cannot have the same radius of curvature and at least four arcs are needed. It follows therefore that the arcs forming the boundary of $K$ have to be even in number. In the case $\alpha=0$, since the support function is continuous, it follows the boundary consists of the arcs of the circle of radius $\beta$ 
adjoining each other at the corner points.
\end{remark}

\begin{example}[$(\alpha,\beta)$-regular $N$-gone] 
\label{N-gone} 
Given $0 \leq \alpha < \beta < \infty$ and $N\in \mathbb{N}$ with $N\geq 3$, we call { \it $(\alpha,\beta)$-regular $N$-gone} the $(\alpha,\beta)$-convex planar set $K$ whose boundary $\partial K$ is made up of $2N$ arcs of circles alternating the radii between $\alpha$ and $\beta$ and such that the length of all the arcs with the same radius is constant. In order to write the parametrized radius of curvature of a general $(\alpha,\beta)$-regular $N$-gone, $K$, fix $\sigma,\tau>0$ such that $N(\sigma+\tau)=2\pi$ and define, for $i\in \{1,2,\dots,2N\}$,
\[
t_i:=
\begin{cases}
\frac{i-1}{2} (\sigma+\tau) + \sigma & \text{if $i$ is odd} \\
\frac{i}{2} (\sigma+\tau) & \text{if $i$ is even} 
\end{cases}.
\]
The parametrized radius of curvature of $K$ can be written as
\[
\rho_{K}(t)=
\begin{cases}
\beta, &  t\in [t_{2i+1}, t_{2i+2}] \\
\alpha,&  t\in [t_{2i+2}, t_{2i+3}]
\end{cases}.
\]
Let $P(K)=L$ be the perimeter of $K$. By Proposition \ref{Properties-02} we easily get 
\begin{equation}
\label{EQ:per-N-gone}
P(K)=(\beta \sigma+ \alpha \tau)N=L
\end{equation}
and therefore
\begin{equation}
\label{EQ:sigma-tau}
\sigma= \frac{L-\alpha2 \pi}{N(\beta-\alpha)} \; ,\;\;\; \tau=\frac{2\pi \beta-L}{N(\beta-\alpha)}.
\end{equation}
The parametric support function of $K$ can consequently be written, by \eqref{EQ-curvature}, as 
\[
p_{K}(t)=
 \begin{cases}
   C_1^{2i+1} \cos t + C_2^{2i+1} \sin t + \beta \,,& t\in [t_{2i+1}, t_{2i+2}] \\ 
   C_1^{2i+2} \cos t + C_2^{2i+2} \sin t + \alpha \,,& t\in [t_{2i+2}, t_{2i+3}]
 \end{cases}, \;\; i\in\{0,1,\dots,N-1\}.
\]
Define $\lambda_{j}:= p_{K}(t_{j})$ for $j\in\{0,1,\dots,N-1\}$. 
The continuity of $p_{k}$ in any $t_{j}$ ensures us that
\[
C_1^{2i+1}= \frac{(\lambda_{2i+2}-\beta)\sin (t_{2i+1})-(\lambda_{2i+1}-\beta)\sin (t_{2i+2})}{\sin (t_{2i+2}  - t_{2i+1}) },
\]
\[
C_2^{2i+1}= \frac{(\lambda_{2i+2}-\beta)\cos (t_{2i+1})-(\lambda_{2i+1}-\beta)\cos (t_{2i+2})}{\sin (t_{2i+1}  - t_{2i+2} )},
\]
\[
C_1^{2i}= \frac{(\lambda_{2i}-\alpha)\sin (t_{2i+1})-(\lambda_{2i+1}-\alpha)\sin (t_{2i})}{\sin (t_{2i+1}  - t_{2i+2}) },
\]
\[
C_2^{2i}= \frac{(\lambda_{2i}-\alpha)\cos (t_{2i+1})-(\lambda_{2i+1}-\alpha)\cos (t_{2i})}{\sin (t_{2i}  - t_{2i+1} )}.
\] 
\end{example}
 
\subsection{Some easy consequences of the Pontryagin principle}

We will reformulate our constrained shape optimisation problem as an optimal control problem and we will exploit the optimality conditions given by the Pontryagin principle. The optimal control approach for shape optimisation problems is classical (see for example the monograph \cite{MR924574} for a wide introduction on the subject and \cite{MR2062547} for a more contemporary approach) and recently has been fruitfully applied to deal with constrained optimisation problems for convex domains (see  \cite{Bayen_2009}, \cite{bayen:tel-00212070}). Here we summarise the elementary notions on control theory and we state the version of Pontryagin optimality conditions suited for our purposes, considering indeed only autonomous problems with periodic phase variables valued in $\mathbb{R}^{2}$. 

Let $I=[a,b]\subset \mathbb{R}$ be a given interval, $J_{u}\subset \mathbb{R}$ be a compact set. For given maps $f, g \in C^{1}(\mathbb{R}^{3})$ and $\mathbf{h} \in C^{1}(\mathbb{R}^{3}, \mathbb{R}^{2})$ consider the problem of minimizing the functional 
\[
F(\mathbf{x},u) := \int_{a}^{b} f\big( \mathbf{x}(t),u(t) \big) dt  
\]
among all pairs $\big(\mathbf{x}(t),u(t)\big)\in W^{1,1}_{per}(I,\mathbb{R}^{2})\times L^{\infty}(I,J)$ that satisfy for almost every $t\in I$ the differential constraint
\begin{equation}\label{state1}
\mathbf{x}'(t)= \mathbf{h} \big( \mathbf{x}(t),u(t) \big)
\end{equation}
as well as the integral constraint
\begin{equation}\label{integral-constraint}
G(\mathbf{x},u) := \int_{a}^{b} g\big( \mathbf{x}(t),u(t) \big) dt  = C_{0}
\end{equation}
for a given constant $C_{0}$. The previous constrained extremal problem is a typical example of {\it optimal control problem}, $u$ in the so called {\it control variable}, $\mathbf{x}$ takes the name of {\it phase variable} and any pair $(\mathbf{x},u)$ that satisfy \eqref{state1} will be called a {\it controlled process}. A controlled process that minimises (locally in a $C(I)$-neighbourhood of $\mathbf{x}$) the functional $F(\mathbf{x},u)$ among the controlled processes satisfying \eqref{integral-constraint} will be called an {\it optimal process} for $F(\mathbf{x},u)$ under \eqref{state1} and \eqref{integral-constraint}. 
Following the Euler's terminology, integral constraints of the type \eqref{integral-constraint} are often named {\it isoperimetric constraints} and we will follow this convention, motivated by the fact that in the next section we will rephrase a geometrical isoperimetric problem as an optimal control problem and  \eqref{integral-constraint} will play exactly the role of the constraint on the perimeter. As it is customary, we will use the self-explanatory notations $\nabla_{\mathbf{x}}f$, $\nabla_{\mathbf{x}}g$, $\nabla_{\mathbf{x}}\mathbf{h}$, $\partial_{u} f$, $\partial_{u} g$, $\partial_{u} \mathbf{h}$ and so on, to denote the partial derivatives of $f$, $g$, $\mathbf{h}$.

\begin{theorem}[Pontryagin Principle]\label{PMP}
Let $(\mathbf{x},u)\in W^{1,1}_{per}(I,\mathbb{R}^{2})\times L^{\infty}(I,J)$ be an optimal process $F(\mathbf{x},u)$ under \eqref{state1} and \eqref{integral-constraint}. Then there exist $\lambda \geq 0$, $\mu \in \mathbb{R}$ and $\mathbf{p}\in W^{1,1}(I,\mathbb{R}^{2})$ not all of them trivial such that, for almost all $t\in I$,
\begin{equation}
\label{adjoint}
\dot{\mathbf{p}}(t)= \mathbf{p} \cdot \nabla_{\mathbf{x}} \mathbf{h}\big(\mathbf{x}(t)),u(t)\big)+ \mu \,  \nabla_{\mathbf{x}} g \big(\mathbf{x}(t)),u(t)\big) - \lambda \, \nabla_{\mathbf{x}} f \big(\mathbf{x}(t)),u(t)\big)
\end{equation}
and the optimal control $u$ satisfies, for all $t\in I$, the optimality condition
\begin{equation}
\label{maximality}
\begin{split}\mathbf{p}(t) \cdot \mathbf{h} & \big(\mathbf{x}(t)),u(t)\big)+ \mu  g \big(\mathbf{x}(t)),u(t)\big) - \lambda  f \big(\mathbf{x}(t)),u(t)\big)  \\ &  =\max_{v \in J }  \left\{ \mathbf{p}(t) \cdot \mathbf{h}\big(\mathbf{x}(t)),v \big)+ \mu  g \big(\mathbf{x}(t)),v \big) - \lambda  f \big(\mathbf{x}(t)),v\big) \right\}.
\end{split}
\end{equation}
\end{theorem}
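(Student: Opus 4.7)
The plan is to reduce the theorem to a standard free-endpoint optimal control problem by augmenting the state, and then to derive the optimality conditions via needle variations combined with a Hahn--Banach separation in the space of first-order endpoint displacements. I would first introduce two auxiliary scalar phase variables $y,z$ satisfying $y'=f(\mathbf{x},u)$ and $z'=g(\mathbf{x},u)$ with $y(a)=z(a)=0$; the original problem becomes the minimisation of $y(b)$ among processes $(\mathbf{x},y,z,u)$ satisfying the enlarged dynamics, the periodicity $\mathbf{x}(a)=\mathbf{x}(b)$, and the terminal condition $z(b)=C_0$. Both the cost and the isoperimetric constraint are thus encoded as pointwise endpoint conditions on the augmented state.

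Given an optimal process $(\mathbf{x}^\ast,u^\ast)$, the main step is to exploit needle variations. For each Lebesgue point $t_0\in I$ of $u^\ast$, each $v\in J$, and each small $\varepsilon>0$, set $u^\varepsilon=v$ on $(t_0-\varepsilon,t_0)$ and $u^\varepsilon=u^\ast$ elsewhere, and denote the corresponding augmented trajectory by $(\mathbf{x}^\varepsilon,y^\varepsilon,z^\varepsilon)$. Standard Gronwall estimates, supported by the $C^1$ regularity of $f,g,\mathbf{h}$ and by the $L^\infty$ bound on $u$, give the first-order expansion $\mathbf{x}^\varepsilon(t)=\mathbf{x}^\ast(t)+\varepsilon\,\delta\mathbf{x}(t)+o(\varepsilon)$ for $t\geq t_0$, where $\delta\mathbf{x}$ solves the linearised equation with initial jump
\[
\delta\mathbf{x}(t_0)=\mathbf{h}(\mathbf{x}^\ast(t_0),v)-\mathbf{h}(\mathbf{x}^\ast(t_0),u^\ast(t_0)),
\]
and analogous expansions hold for $\delta y,\delta z$. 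The vectors $(\delta\mathbf{x}(b)-\delta\mathbf{x}(a),\delta z(b),\delta y(b))\in\mathbb{R}^2\times\mathbb{R}\times\mathbb{R}$ attainable by conic combinations of finitely many needles patched together form a convex cone $\mathcal{C}$, and optimality of $(\mathbf{x}^\ast,u^\ast)$ forbids $\mathcal{C}$ from meeting the open half-space $\{(\mathbf{0},0,\eta):\eta<0\}$.

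The Hahn--Banach theorem then provides a nonzero covector $(\mathbf{q},\mu,\lambda)$ with $\lambda\geq 0$ separating $\mathcal{C}$ from this half-space. I would define $\mathbf{p}\in W^{1,1}(I,\mathbb{R}^2)$ as the backward solution of the adjoint equation \eqref{adjoint} with terminal value $\mathbf{p}(b)=\mathbf{q}$. Integration by parts shows that the separation inequality evaluated on a single needle at $t_0$ is exactly the statement that the map
\[
v\longmapsto \mathbf{p}(t_0)\cdot\mathbf{h}(\mathbf{x}^\ast(t_0),v)+\mu\,g(\mathbf{x}^\ast(t_0),v)-\lambda\,f(\mathbf{x}^\ast(t_0),v)
\]
is maximised over $v\in J$ at $v=u^\ast(t_0)$, which is \eqref{maximality}; simultaneously, the periodicity constraint yields the transversality $\mathbf{p}(a)=\mathbf{p}(b)$, so that $\mathbf{p}$ extends to a periodic $W^{1,1}$ function. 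The chief obstacle is securing the convexity and closure properties of the attainable cone $\mathcal{C}$ required by the separation argument: one must combine multi-needle variations so that every conic combination of elementary directions is itself realised by an admissible variation, and handle the fact that the separating halfspace is open only in the cost coordinate. Once these structural points are in place, nontriviality of $(\lambda,\mu,\mathbf{p})$ follows from that of the separating covector.
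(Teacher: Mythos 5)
The paper does not actually prove this theorem: it is stated as a classical result, with the surrounding discussion pointing the reader to the cited monograph of Alekseev, Tikhomirov and Fomin for the derivation of the adjoint system, so there is no in-paper argument to compare yours against. Your proposal is the standard Boltyanskii--Pontryagin proof --- state augmentation to turn the cost and the isoperimetric constraint into endpoint conditions, needle (packet) variations at Lebesgue points, the convex cone of first-order endpoint displacements, a separation argument, and backward integration of the adjoint equation --- and as an outline it is correct; it is essentially the proof found in the cited reference, and it has the virtue of working for an arbitrary compact control set $J$. Two points would need sharpening in a full write-up. First, optimality does not directly forbid the cone $\mathcal{C}$ of \emph{first-order} displacements from meeting the ray $\{(\mathbf{0},0,\eta):\eta<0\}$; it forbids that ray from lying in the \emph{interior} of $\mathcal{C}$, and passing from ``the descent ray is interior to the cone of linearised directions'' to ``a genuinely admissible better competitor exists'' requires a topological (Brouwer fixed point or degree) argument --- this is the real content behind the closure and convexity issues you flag as the chief obstacle, and acknowledging the obstacle is not the same as resolving it. Second, the needle construction yields the maximality condition only at Lebesgue points of $u$, i.e.\ almost everywhere, which is all one can assert for an $L^{\infty}$ control; and to obtain the periodic transversality $\mathbf{p}(a)=\mathbf{p}(b)$ you must also include variations of the free initial point $\mathbf{x}(a)$ among the elementary directions generating $\mathcal{C}$, since with needle variations alone $\delta\mathbf{x}(a)=0$ and the coordinate $\delta\mathbf{x}(b)-\delta\mathbf{x}(a)$ carries no information forcing a boundary condition on $\mathbf{p}$. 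Neither point invalidates the strategy; both are the standard technical steps of the classical proof, and the statement as given in the paper does not even require the periodicity of $\mathbf{p}$.
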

The differential system \eqref{adjoint} takes the name of {\it adjoint system} and it is nothing but the Euler-Lagrange equation derived as a stationarity condition on the Lagrangian of the optimal problem (cf. \cite[\S 4.2.2]{MR924574}). 

We will use a couple of consequences of Theorem \ref{PMP} when applied to costrained problems arising in plane convex geometry. To this aim in the following corollary  we specify the Pontryagin's conditions for optimality in one dimensional control problems with a second order differential constraint.
\begin{corollary}
\label{Cor-PMP-01}
Given $f\in C^{1}(\mathbb{R}^{2})$ and $g \in C^{1}(\mathbb{R}^{2})$ and a constraint $C_{0}$. Let the pair $(x,u)\in W^{2,1}_{per}(I, \mathbb{R})\times L^{\infty}(I,J)$ be a minimizer of the functional 
\[
F(x,u) := \int_{a}^{b} f\big(x(t),u(t) \big) dt  
\]
among all the admissible pairs satisfying the differential constraint
\begin{equation}\label{state-02}
x(t)+\ddot{x}(t)=u(t)
\end{equation}
and the integral constraint
\begin{equation}\label{integral-constraint-02}
\int_{a}^{b} g\big(x(t),u(t)\big)dt  = C_{0}.
\end{equation}
Then there exist $\lambda \geq 0$, $\mu \in \mathbb{R}$ and $p \in W^{1,1}(I,\mathbb{R})$ not all of them trivial such that, for almost all $t\in I$,  $p$ is a solution of the equation
\begin{equation}
\label{adjoint-02}
\ddot{p}(t)+p(t)= + \mu \,  \partial_{x} g \big(x(t),u(t)\big) - \lambda \, \partial_{x} f \big(x(t),u(t)\big)
\end{equation}
and the optimal control $u$ satisfies, for all $t\in I$, the optimality condition
\begin{equation}
\label{maximality-02}
 \begin{split} p(t) u(t) &  + \mu g\big(x(t),u(t)\big) - \lambda f\big( x(t),u(t)\big) \\ 
 	& = \max_{v \in J } \left\{ p(t) v + \mu g\big(x(t),v\big) - \lambda f\big( x(t),v\big) \right\}.
\end{split}
\end{equation}
\end{corollary}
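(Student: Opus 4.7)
The plan is to reduce the second-order control problem to the first-order setting of Theorem \ref{PMP} by a standard phase-space lift, and then to unpack the conclusions. Concretely, I would introduce the phase variable $\mathbf{x}(t) := (x_1(t), x_2(t)) = (x(t), \dot{x}(t)) \in W^{1,1}_{per}(I,\mathbb{R}^{2})$ and the vector field
\[
\mathbf{h}(\mathbf{x},u) := \bigl(x_2,\, -x_1 + u\bigr),
\]
so that the differential constraint $x + \ddot{x} = u$ becomes $\dot{\mathbf{x}} = \mathbf{h}(\mathbf{x},u)$. The cost and the isoperimetric constraint are encoded by $\tilde f(\mathbf{x},u) := f(x_1,u)$ and $\tilde g(\mathbf{x},u) := g(x_1,u)$, which depend on $\mathbf{x}$ only through the first component; in particular $\partial_{x_2}\tilde f = \partial_{x_2}\tilde g = 0$. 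The periodicity of $x$ together with the state equation makes $\mathbf{x}$ periodic in $W^{1,1}_{per}(I,\mathbb{R}^{2})$, so Theorem \ref{PMP} applies.

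Having applied Theorem \ref{PMP}, I would get multipliers $\lambda\geq 0$, $\mu\in\mathbb{R}$ and a costate $\mathbf{p}=(p_1,p_2)\in W^{1,1}(I,\mathbb{R}^{2})$, not all trivial, satisfying the adjoint system \eqref{adjoint}. Using the computed Jacobian
\[
\nabla_{\mathbf{x}}\mathbf{h} = \begin{pmatrix} 0 & -1 \\ 1 & 0 \end{pmatrix}
\]
and the vanishing of $\partial_{x_2}\tilde f$ and $\partial_{x_2}\tilde g$, the adjoint equations read
\[
\dot p_1 = -p_2 + \mu\,\partial_x g(x,u) - \lambda\,\partial_x f(x,u), \qquad \dot p_2 = p_1.
\]
From the second equation $p_1 = \dot p_2$, and substituting into the first gives $\ddot p_2 + p_2 = \mu\,\partial_x g - \lambda\,\partial_x f$. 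Setting $p := p_2 \in W^{1,1}(I,\mathbb{R})$ (in fact $W^{2,1}$, which embeds into $W^{1,1}$) yields exactly \eqref{adjoint-02}.

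For the maximality condition, the Hamiltonian in \eqref{maximality} evaluated at a control value $v\in J$ is
\[
\mathbf{p}(t)\cdot\mathbf{h}(\mathbf{x}(t),v) + \mu\,\tilde g(\mathbf{x}(t),v) - \lambda\,\tilde f(\mathbf{x}(t),v) = p_1 x_2 - p_2 x_1 + p_2 v + \mu g(x,v) - \lambda f(x,v).
\]
The first two summands do not depend on $v$, so they cancel when taking the maximum over $v\in J$; this collapses \eqref{maximality} to \eqref{maximality-02} with $p=p_2$. Finally, to finish I would check nontriviality of $(\lambda,\mu,p)$: if $p\equiv 0$ on $I$ then $p_1=\dot p_2 \equiv 0$ too, so the nontriviality of $(\lambda,\mu,\mathbf{p})$ forces $(\lambda,\mu,p)\neq 0$. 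I do not expect any real obstacle here: the only mild subtleties are making sure the reduction preserves the function-space regularity (clear since $x\in W^{2,1}_{per}$ iff $(x,\dot x)\in W^{1,1}_{per}$) and the harmless bookkeeping of identifying $p$ with the second component of the costate.
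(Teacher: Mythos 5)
Your proposal is correct and follows essentially the same route as the paper: the phase-space lift $\mathbf{x}=(x,\dot x)$ with $\mathbf{h}(\mathbf{x},u)=(x_2,\,u-x_1)$, an application of Theorem \ref{PMP}, elimination of $p_1$ from the adjoint system to get \eqref{adjoint-02} for $p=p_2$, and the observation that the $v$-independent terms $p_1x_2-p_2x_1$ drop out of the maximization to yield \eqref{maximality-02}. Your explicit check of nontriviality of $(\lambda,\mu,p)$ is a small but welcome addition that the paper leaves implicit.
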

\begin{proof}
The proof easily follows by rewriting the differential constraint as a system of first order equation and applying Theorem \ref{PMP} with the phase variable $\mathbf{x}(t)=(x_{1}(t),x_{2}(t))$ that satisfies the system 
\[
\left\{
\begin{array}{l}
\dot{x_{1}}= x_{2} \\
\dot{x_{2}}=u-x_{1}
\end{array}
\right. . 
\]
The functionals involved are independent of the auxiliary variable $x_{2}$ and Theorem \ref{PMP} provides the existence of the multipliers exist $\lambda$, $\mu$ and $\mathbf{p}:=(p_{1},p_{2})$ satisfying the adjoint system 
\[
\left\{
\begin{array}{l}
\dot{p_{1}}= p_{2}+ \mu \,  \partial_{x} g \big(x(t)),u(t)\big) - \lambda \, \partial_{x} f \big(x(t)),u(t)\big) \\
\dot{p_{2}}=-p_{1}
\end{array}
\right.
\]
and the maximality condition
\[
 \begin{split} p_{1}(t)x_{2}(t) & +p_{2}(t) \big(u(t)-x_{1}(t) \big)  + \mu g\big(x_{1}(t),u(t)\big) - \lambda f\big( x_{1}(t),u(t)\big) \\ 
 	& = \max_{v \in J } \left\{ p_{1}(t)x_{2}(t)+p_{2}(t) \big(v-x_{1}(t) \big) + \mu g\big(x_{1}(t),v\big) - \lambda f\big( x_{1}(t),v\big) \right\}.  \\
	& = p_{1}(t)x_{2}(t) - p_{2}(t) x_{1}(t) + \max_{v \in J } \left\{ p_{2}(t) v + \mu g\big(x_{1}(t),v\big) - \lambda f\big( x_{1}(t),v\big) \right\}.
\end{split}	
\]
These equations are easily seen to be equivalent to \eqref{adjoint-02} and \eqref{maximality-02} setting $p=p_{2}$ and $x=x_{1}$. 

\end{proof}

In the special case when the functional $F$ and the isoperimetric constraint are linear in the control variable $u$, we can further deduce a {\it bang-bang} type condition for optimal controls. The following corollary easily follows from the previous one from the optimality condition \eqref{maximality-02} (being linear in the $v$ variable). 

\begin{corollary}
\label{Cor-PMP-02}
Under the same assumptions of Corollary \ref{Cor-PMP-01}, if we further assume that $f(x,u)=a(x)\,u$ and $g(x,u)=b(x)\,u$ with $a,b\in C^{1}(I)$, then we have
\begin{equation}\label{bang-01}
u(t)= \begin{cases}
\beta & \text{if } \;\; p(t)+\mu\, b(x(t)) - \lambda \,a(x(t)) > 0 \\
\alpha & \text{if } \;\; p(t)+\mu\, b(x(t)) - \lambda \, a(x(t)) < 0 
\end{cases}
\end{equation}
where  $\alpha:= \min\{t \,: \,t \in J\}$ and $\beta := \max\{t\,:\, t\in J \}$. \end{corollary}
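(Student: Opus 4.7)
The plan is to read off the conclusion directly from the maximality condition \eqref{maximality-02} proved in Corollary \ref{Cor-PMP-01}. First I would substitute the prescribed linear dependence $f(x,u)=a(x)u$, $g(x,u)=b(x)u$ into \eqref{maximality-02}. Since each of the three summands inside the maximum is proportional to $v$, the $v$-dependent part can be factored and the pointwise optimality condition collapses to
\[
u(t)\in\arg\max_{v\in J}\Bigl\{v\bigl[p(t)+\mu\, b(x(t))-\lambda\, a(x(t))\bigr]\Bigr\}
\]
for almost every $t\in I$.

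Next I would invoke the elementary observation that a linear function $v\mapsto c\,v$ on a compact interval $J\subset\mathbb{R}$ attains its maximum at $\max J=\beta$ when $c>0$ and at $\min J=\alpha$ when $c<0$. Setting $c(t):=p(t)+\mu\, b(x(t))-\lambda\, a(x(t))$, this immediately produces the bang-bang dichotomy \eqref{bang-01}. (Note that $c$ is measurable, in fact continuous on $I$, since $p\in W^{1,1}(I,\R)$ and $x\in W^{2,1}_{per}(I,\R)$ are both absolutely continuous and $a,b\in C^1$.)

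There is no real obstacle here: the corollary is an essentially algebraic consequence of the pointwise maximization in \eqref{maximality-02}, made possible by the joint linearity of $f$ and $g$ in the control variable. The only subtlety worth flagging—not an obstruction—is that on the \emph{singular} set $\{t\in I:c(t)=0\}$ every admissible value of $v\in J$ realizes the maximum, so the optimality condition leaves $u(t)$ undetermined there; this is precisely why the statement only prescribes $u$ on the open sign sets $\{c>0\}$ and $\{c<0\}$, which is the standard form of the bang-bang principle for linear-in-control problems.
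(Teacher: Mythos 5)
Your proof is correct and follows exactly the route the paper indicates: it states that the corollary ``easily follows from the previous one from the optimality condition \eqref{maximality-02} (being linear in the $v$ variable)'', which is precisely your observation that the maximization of a linear function of $v$ over the compact interval $J$ is achieved at $\beta$ or $\alpha$ according to the sign of the coefficient $p(t)+\mu\,b(x(t))-\lambda\,a(x(t))$. Your remark on the singular set where this coefficient vanishes matches the paper's subsequent Remark on the set $S$.
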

\begin{remark}
Let us remark that if the set
\[
S:=\{ t\in I \; :\; p(t)+\mu\, b(x(t)) - \lambda \,a(x(t)) = 0\}
\]
has zero Lebesgue measure, then $u$ is almost everywhere determined by \eqref{bang-01}. This is the case for instance if $(x,u)$ is an optimal control with a a non-singular trajectory (cf. \cite{MR2201076}).
\end{remark}

\section{Main result}
\label{SEC:Main}
The main results of the paper are stated in the following theorem.

\begin{theorem}\label{thm-main-inequality}
Let $0\leq \alpha<\beta<\infty$. For any $K\in \mathcal{K}^{2}$, planar $(\alpha,\beta)$-convex body such the $2\pi \alpha  < P(K) < 2\pi \beta$,  the following inequality holds true:
\begin{equation}
\label{rev-isop}
A (K)\geq \frac 12 (\beta+\alpha)(P(K)-2\pi \alpha)+\pi \alpha^2- (\beta-\alpha)^2 \sin\left(\frac{P(K)-2\pi \alpha}{ 2(\beta-\alpha)}\right).
\end{equation}
Moreover the equality holds if and only if $K$ is the $(\alpha,\beta)-$egg.
\end{theorem}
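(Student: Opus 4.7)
The approach is to phrase the area-minimization problem as an isoperimetric optimal control problem in the parametric support function of $K$, apply the Pontryagin principle in the form of Corollary \ref{Cor-PMP-02} to a minimizer, and then analyze the resulting bang-bang structure to identify the minimizer with the $(\alpha,\beta)$-egg.

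\textbf{Step 1: existence of a minimizer.} Fix $L\in(2\pi\alpha,2\pi\beta)$ and take a minimizing sequence $(K_n)$ in the class of planar $(\alpha,\beta)$-convex bodies with $P(K_n)=L$. After a translation we may assume that each $K_n$ is contained in a common ball (for planar convex bodies $\mathrm{diam}(K)\le P(K)/2$), so Blaschke's selection theorem yields a subsequence converging in Hausdorff distance to some $K^*$. Lemma \ref{lemma-continuity} ensures that $K^*$ is $(\alpha,\beta)$-convex, and the continuity of $P$ and $A$ under Hausdorff convergence gives $P(K^*)=L$ and $A(K^*)=\inf A$.

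\textbf{Step 2: Pontryagin conditions.} Set $q:=p_{K^*}$ and $u:=q+\ddot q$. By Proposition \ref{Properties-02} the problem reads: minimize $\tfrac12\int_0^{2\pi}q\,u\,dt$ subject to $u(t)\in[\alpha,\beta]$ and $\int_0^{2\pi}u\,dt=L$, which fits Corollary \ref{Cor-PMP-02} with $a(q)=q/2$ and $b(q)=1$. It produces multipliers $\lambda\ge0$, $\mu\in\mathbb{R}$ and a periodic adjoint $p$, not all zero, such that
$$\ddot p + p = -\tfrac{\lambda}{2}u, \qquad u=\beta\text{ on }\{\phi>0\},\quad u=\alpha\text{ on }\{\phi<0\},$$
where the switching function is $\phi(t):=p(t)+\mu-\tfrac{\lambda}{2}q(t)$. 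The abnormal case $\lambda=0$ is excluded by a short direct argument: in that case $p=A\cos t+B\sin t$ is a pure sinusoid, so $\phi=p+\mu$ has at most two zeros per period, forcing $\partial K^*$ to consist of at most one $\alpha$-arc and one $\beta$-arc; the $C^1$-matching condition at the two junctions then reduces to $\boldsymbol\sigma(t_1)=\boldsymbol\sigma(t_2)$, which is incompatible with having two distinct switching points.

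\textbf{Step 3: structural analysis and conclusion.} On each maximal arc where $u\equiv u_i\in\{\alpha,\beta\}$ one has
$$q(t)=\mathbf{c}_i\cdot\boldsymbol\sigma(t)+u_i,\qquad \phi(t)=\mathbf{w}_i\cdot\boldsymbol\sigma(t)+(\mu-\lambda u_i),$$
and the $C^1$ regularity of $q$ and $\phi$ at each switch point $t^*$ yields the recursion relations
$$\mathbf{c}_{i+1}-\mathbf{c}_i=(u_i-u_{i+1})\boldsymbol\sigma(t^*),\qquad \mathbf{w}_{i+1}-\mathbf{w}_i=\lambda(u_i-u_{i+1})\boldsymbol\sigma(t^*).$$
If $t_1<t_2<\cdots<t_{2N}$ are the switching times, periodicity closes these relations into $\sum_{k=1}^{2N}(-1)^k\boldsymbol\sigma(t_k)=0$, together with the pointwise sign constraints $\phi\le 0$ on $\alpha$-arcs and $\phi\ge 0$ on $\beta$-arcs. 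The rigidity of the sinusoidal form of $\phi$ on each arc, combined with these sign constraints and the closure identity, forces $N=2$ and a symmetric placement of the switching times, identifying $K^*$ with the $(\alpha,\beta)$-egg of Example \ref{exa-eggs} with parameter $\tau=(L-2\pi\alpha)/(4(\beta-\alpha))$ dictated by the perimeter. A direct area computation on this egg—decomposing it into two circular sectors of radius $\beta$ of half-aperture $\tau$, two of radius $\alpha$ of half-aperture $\pi/2-\tau$, and the rhombus on the four centres $\mathbf{c}_1,\mathbf{c}_2,\mathbf{c}_3,\mathbf{c}_4$, then simplifying via $\sin(2\tau)=2\sin\tau\cos\tau$—reproduces the right-hand side of \eqref{rev-isop}, and uniqueness in the equality case follows from the uniqueness of this four-arc configuration. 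The main obstacle is precisely this structural step: ruling out bang-bang configurations with more than four arcs through a careful bookkeeping of the zeros and sign changes of $\phi$ is where the analytic machinery replaces the geometric arguments of Borisenko--Drach.
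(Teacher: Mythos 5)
Your Steps 1 and 2 match the paper's argument (existence via Blaschke selection and Lemma \ref{lemma-continuity}, then Corollary \ref{Cor-PMP-02} with the exclusion of the abnormal case $\lambda=0$ via the two-arc contradiction). The genuine gap is in Step 3, and it is not a small one: the claim that ``the sign constraints and the closure identity force $N=2$'' is false. For every $N\ge 2$ the $(\alpha,\beta)$-regular $N$-gone of perimeter $L$, translated to its center of symmetry, satisfies \emph{all} of the first-order Pontryagin conditions: at the switching points $p_K$ takes the common value $\gamma$ of Remark \ref{REM:ValueLambda}, on each $\beta$-arc one has $p_K(t)=\mathbf{c}\cdot\boldsymbol\sigma(t)+\beta$ with the center $\mathbf{c}$ lying on the far side of the origin (see \eqref{CIJ}, with $\gamma<\beta$), so $p_K<\gamma$ strictly in the interior of the arc, and symmetrically $p_K>\gamma$ on the $\alpha$-arcs. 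So the whole family of regular $N$-gones survives the necessary conditions, and no amount of ``bookkeeping of the zeros and sign changes of $\phi$'' will eliminate $N\ge 3$. What the optimality conditions do give (and what your recursion relations at the switch points encode, once you also use the strict monotonicity of $t\mapsto(1-\cos t)/\sin t$ as in Lemma \ref{Lem-N-gone}) is only that the minimizer is a regular $N$-gone for \emph{some} $N$.

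The missing ingredient is therefore a direct comparison of the areas of the regular $N$-gones of fixed perimeter $L$ across $N$. In the paper this is Lemma \ref{Lem-best-N-gone}: one computes
\[
A(K)= \beta^{2} \pi - (\beta^2  +\alpha^2) \frac{\omega}{2} - (\beta- \alpha)^{2}\, \Phi (N,\omega),
\qquad
\Phi(N,\omega)= N\, \frac{\sin \left(\frac{\omega}{N}\right) \sin\left(\frac{\pi-\omega}{N}\right)}{\sin\left(\frac{\pi}{N}\right)},
\]
with $\omega=(L-2\pi\alpha)/(2(\beta-\alpha))$, and then proves $\Phi(N,\omega)\le\Phi(2,\omega)=\sin\omega$ for all $N\ge 2$ via a monotonicity argument (the functions $h_N$ and $f_N$ in the paper). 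This inequality is the analytic heart of the theorem and is entirely absent from your proposal; without it you have only shown that the minimizer is one of countably many critical configurations, not that it is the egg. Your concluding area computation for the egg itself is fine, but it only establishes the value of the right-hand side of \eqref{rev-isop}, not that the egg beats the other $N$-gones.
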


\begin{remark}
\label{thm-Main-geometric}
Let $0\leq\alpha<\beta<\infty$ and $L\in (2\pi \alpha ,2\pi \beta)$. Then, modulo proper rigid transformations, the $(\alpha,\beta)$-egg is the unique minimizer of the area functional among all the $(\alpha,\beta)$-convex bodies in the plane with given perimeter equal to $L$. 
\end{remark}

From now on in this section we will implicitly assume that $\alpha, \beta$ and $L$ are fixed in such a way that $0 \leq \alpha<\beta<\infty$ and $L\in (2\pi \alpha ,2\pi \beta)$. The proof of the main theorem will be a consequence of the following lemmata. In the next one we prove the existence of an optimal set. Its convexity is ensured by Lemma \ref{lemma-continuity}.

\begin{lemma}
\label{Lem-existence}
  The shape optimisation problem
  \begin{equation}\label{P}
  \min\big\{A(K): K\in \mathcal{K}^{2}  \mbox{ is an } (\alpha,\beta)-\mbox{convex body with } P(K)=L\big\}
  \end{equation}
  admits at least a solution.
\end{lemma}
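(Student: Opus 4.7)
The plan is to apply the direct method of the calculus of variations, with the Blaschke selection theorem supplying compactness and Lemma~\ref{lemma-continuity} supplying closedness of the admissible class.

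First, I take a minimizing sequence $\{K_n\}_{n\in\mathbb{N}}$, so that $K_n$ is an $(\alpha,\beta)$-convex body with $P(K_n)=L$ and $A(K_n)\searrow \inf$. Because $(\alpha,\beta)$-convexity, perimeter and area are all invariant under translations of $\mathbb{R}^2$, I can normalize by translating each $K_n$ so that the origin belongs to $K_n$. Using the elementary bound $\operatorname{diam}(K_n)\le P(K_n)/2=L/2$ valid for any planar convex set, this places every $K_n$ inside the fixed ball $B_{L/2}$, hence the sequence is bounded in the Hausdorff topology.

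Next, I invoke the Blaschke selection theorem (recalled in Section~\ref{SEC:prelim}) to extract a (non-relabelled) subsequence converging in Hausdorff distance to a compact convex set $K$. Lemma~\ref{lemma-continuity} then transfers the $(\alpha,\beta)$-convexity to the limit: the parametric support function $p_K$ is $C^{1,1}$ and satisfies $\alpha\le p_K+p_K''\le\beta$ a.e. Finally, the continuity of the area and the perimeter with respect to the Hausdorff topology (cited in Section~\ref{SEC:prelim}) yields $P(K)=L$ and $A(K)=\inf$, so $K$ is an admissible minimizer of \eqref{P}.

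The only subtle point is to verify that $K$ is a genuine convex body, i.e.~has non-empty interior, as required by the definition of $\mathcal{K}^2$. When $\alpha>0$, this is immediate because each $K_n$ is $1/\alpha$-concave (Remark~\ref{rem-equivdef}), so by the summand argument used in the proof of Lemma~\ref{Regularity-B}, $B_\alpha$ is a summand of $K_n$ and hence each $K_n$ contains a translate of $B_\alpha$; by a further adjustment of the translation normalization one can arrange that $B_\alpha\subset K_n$ for all $n$, and this inclusion is preserved under Hausdorff convergence, giving $B_\alpha\subset K$. When $\alpha=0$, the possible collapse of the limit to a segment is ruled out by the fact that a segment has a parametric support function of the form $c\,|\cos(t-t_0)|$, which is not $C^{1,1}$ and therefore incompatible with the regularity of $p_K$ provided by Lemma~\ref{lemma-continuity}, given that $L>0$. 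This non-degeneracy verification is the main obstacle in the argument; everything else is a routine application of the direct method.
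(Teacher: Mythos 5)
Your proof is correct and follows essentially the same route as the paper: direct method, equi-boundedness of the minimizing sequence, Blaschke selection, Lemma~\ref{lemma-continuity} for closedness of the admissible class, and Hausdorff-continuity of area and perimeter. Your extra verification that the limit is a non-degenerate convex body (via the inscribed ball $B_\alpha$ when $\alpha>0$, and via the non-$C^{1,1}$ support function of a segment together with $L>0$ when $\alpha=0$) is a point the paper absorbs into its citation of the Blaschke selection theorem, and is a welcome addition rather than a divergence.
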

\begin{proof}
The proof follows by the direct methods of Calculus of Variations. 
Any minimizing sequence $K_n$ is bounded. Indeed, all the competitors are convex sets with perimeter and area equi-bounded, therefore also their diameters are equi-bounded (see for example \cite[Lemma 4.1]{Esposito2005}). 
By Blaschke selection theorem, up to extracting a subsequence, $K_n$ converges to a convex body $K_{\infty}$ in the Hausdorff metric. Lemma \ref{lemma-continuity} ensures that  $K_{\infty}$ is an admissible set and the conclusion follows by continuity of the perimeter and area functionals on $(\mathcal{K}^{2},d_{H})$. 
\end{proof}
In the next lemma we derive the optimality conditions for our minimization problem that is, the ODE (\ref{eq-optimality-for-K}). The argument is similar to that one used in \cite{Borisenko_2014}. 
We observe that a delicate point is to prove that the set of points where the support function $p_K$ is equal to $\gamma$ is finite.
\begin{lemma}
\label{Lem-optimality-condition}
Let $K\in\mathcal{K}^{2}$ be a minimizer for problem \eqref{P}. Then up to eventually translate $K$, there exists a constant $\gamma\in \mathbb{R}$ such that 
\begin{equation}\label{eq-optimality-for-K}
	\rho_{K}(t)=
	\left\{
  		\begin{array}{ll}
    		\beta, &\mbox{for} \quad p_{K}(t) < \gamma  \\
    		\alpha,        &\mbox{for} \quad  p_{K}(t) >\gamma
    	\end{array}
    \right., \;\; \text{a.e.}\;\; t \in (0, 2\pi),
 \end{equation}
 where with $p_{K}$, with a slight abuse of notation, we denoted the parametrised support function of the eventual translation of $K$ and $\rho_{K}(t)$ is the radius of curvature of $\partial K$. Moreover the set $S:=\{ t \in [0, 2\pi)\;:\; p_{K}(t) = \gamma\}$ is finite.
\end{lemma}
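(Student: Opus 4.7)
The plan is to recast the minimization as an optimal control problem and apply Corollary \ref{Cor-PMP-02}. I take the phase variable $x(t) := p_K(t)$ and control $u(t) := \rho_K(t) \in J := [\alpha, \beta]$. Proposition \ref{Properties-01} gives the state equation $x + \ddot x = u$, Proposition \ref{Properties-02} gives the perimeter constraint $\int_0^{2\pi} u(t)\, dt = L$, and the area to minimize is $F(x,u) = \tfrac{1}{2}\int_0^{2\pi} x(t) u(t)\, dt$. Corollary \ref{Cor-PMP-02} with $a(x) = x/2$ and $b(x) \equiv 1$ then produces multipliers $\lambda \ge 0$, $\mu \in \mathbb{R}$ and an adjoint $q \in W^{1,1}_{per}$, not all trivial, solving $\ddot q + q = -\tfrac{\lambda}{2} u$ and the bang-bang prescription $u = \beta$ (resp.\ $u = \alpha$) whenever the switching function $\phi(t) := q(t) + \mu - \tfrac{\lambda}{2} p_K(t)$ is positive (resp.\ negative).

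Next, I introduce $\psi := q + \tfrac{\lambda}{2} p_K$; combining the adjoint and state equations gives $\ddot \psi + \psi = 0$, so $\psi(t) = A \cos t + B \sin t$ for some constants $A, B \in \mathbb{R}$, and hence $\phi(t) = \mu + A \cos t + B \sin t - \lambda\, p_K(t)$. I then rule out the abnormal case $\lambda = 0$: in that case $\phi$ would be a pure sinusoid plus constant with at most two zeros on $[0, 2\pi)$, forcing $\partial K$ to consist of at most two arcs of constant curvature, which contradicts the $C^{1,1}$ regularity of $\partial K$ when $\alpha > 0$ (cf.\ Remark \ref{REM-consecArc}) and the strict perimeter bounds $2\pi\alpha < L < 2\pi\beta$. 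For $\lambda > 0$, I translate $K$ by the vector $\mathbf{v} := (A/\lambda, B/\lambda)$; since $p_{K-\mathbf{v}}(t) = p_K(t) - \langle \mathbf{v}, \boldsymbol\sigma(t)\rangle$, the sinusoidal terms are absorbed, and denoting the new support function again by $p_K$ (as permitted by the statement) one obtains $\phi(t) = -\lambda(p_K(t) - \gamma)$ with $\gamma := \mu/\lambda$. Since $\rho_K$ is translation invariant, the bang-bang condition becomes exactly \eqref{eq-optimality-for-K}.

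Finally, I prove that $S := \{t \in [0, 2\pi) : p_K(t) = \gamma\}$ is finite. The set $S$ is closed by continuity of $p_K$, so it suffices to show every point of $S$ is isolated. Suppose $t_0 \in S$ were a cluster point. Extracting $t_n \in S \setminus \{t_0\}$ with $t_n \to t_0$ and using the $C^1$ regularity of $p_K$ gives $p_K(t_0) = \gamma$ and $p_K'(t_0) = 0$. On the right of $t_0$ (the left case is symmetric), either there exists a connected component $(t_0, t_0 + \delta)$ of $\{p_K \ne \gamma\}$ emanating from $t_0$, or $(t_0, t_0 + \delta') \subset S$ for some $\delta' > 0$. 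In the first case, if the component is a $\beta$-arc, then $p_K$ solves $p_K + p_K'' = \beta$ on $(t_0, t_0 + \delta)$ with the Cauchy data $p_K(t_0) = \gamma$, $p_K'(t_0) = 0$; the unique solution $p_K(t) = \beta + (\gamma - \beta)\cos(t - t_0)$ gives $p_K(t) - \gamma = (\beta - \gamma)\bigl(1 - \cos(t - t_0)\bigr) \ge 0$ on the arc, contradicting $p_K < \gamma$ there (the $\alpha$-arc case is analogous). In the plateau case, $p_K \equiv \gamma$ forces $\rho_K \equiv \gamma$ on $[t_0, t_0 + \delta']$, and a perimeter-preserving perturbation replacing the singular segment by an alternating bang-bang of radii $\alpha$ and $\beta$ yields an admissible $(\alpha,\beta)$-convex body with strictly smaller area, contradicting minimality.

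The principal obstacle is the finiteness of $S$: the Pontryagin principle determines $\rho_K$ only on $S^c$, so one must separately exclude both the accumulation of switching points (handled by the Cauchy-data argument using the explicit sinusoidal form of $p_K$ on each arc) and the plateau case where $p_K \equiv \gamma$ on an interval, the latter being the genuinely delicate step that requires constructing an explicit area-decreasing, perimeter-preserving perturbation.
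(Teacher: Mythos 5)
Your reduction to an optimal control problem, the application of Corollary \ref{Cor-PMP-02} with $a(x)=x/2$ and $b(x)\equiv 1$, the observation that $\psi=q+\tfrac{\lambda}{2}p_K$ solves $\ddot\psi+\psi=0$, the exclusion of the abnormal case $\lambda=0$, and the normalizing translation absorbing the sinusoid are exactly the paper's argument; this part is correct.

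The finiteness of $S$ is where your proposal has genuine gaps. First, the dichotomy you pose at a cluster point $t_0$ --- either a connected component of $S^c$ emanates from $t_0$, or $(t_0,t_0+\delta')\subset S$ --- is not exhaustive: a priori $S^c$ could have infinitely many components accumulating at $t_0$ from the right with points of $S$ interlaced between them, so that neither alternative holds; this accumulation scenario is precisely the delicate configuration to be excluded and your argument does not address it. (The paper instead organizes the proof around the connected components of $S^c$ and shows that their endpoints are isolated in $S$, which at least forces consecutive components to abut.) Second, in the ``component'' case your contradiction rests on the sign of $(\beta-\gamma)\bigl(1-\cos(t-t_0)\bigr)$, i.e.\ on $\gamma\le\beta$ (and dually $\gamma\ge\alpha$ for $\alpha$-arcs), which you have not established; if $\gamma>\beta$ the displayed solution satisfies $p_K<\gamma$ on all of $(t_0,t_0+2\pi)$, and the correct conclusion is that the component is the full circle, excluded by $L<2\pi\beta$. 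This second horn must be stated --- it is the analogue of the paper's alternative ``$\gamma=\alpha$ or $b_0-a_0=2\pi$'' obtained from its determinant condition, which avoids any sign assumption. Third, your treatment of the plateau is only a sketch: the announced bang-bang perturbation must satisfy the three linear constraints $\int\delta\rho\,dt=\int\delta\rho\cos t\,dt=\int\delta\rho\sin t\,dt=0$ to yield a closed admissible body of the same perimeter, the area decrease is a second-order effect (the first variation vanishes on the singular arc, so a second-variation computation is needed), and the perturbation ceases to be two-sided when $\gamma\in\{\alpha,\beta\}$. The paper disposes of the plateau far more cheaply, by applying the same $C^1$-matching argument at the endpoints of a connected component of the interior of $S$, using the adjacent component of $S^c$; you should either adopt that route or supply the missing construction and second-order estimate.
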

\begin{proof}

We start observing that by Propositions \ref{Properties-01} we can identify a given admissible set $K\in \mathcal{K}^{2}$ with its parametric support function $p_{K}$ and by  Proposition \ref{Properties-02} we can rephrase the minimization problem \eqref{P} as an optimal control problem. 
If $K\in\mathcal{K}^{2}$ is a minimizer for problem \eqref{P}, then the pair given by its support function and its parametric radius of curvature, i.e.  $(x,u)=(p_{K},\rho_{K}) \in W^{2,1}_{per}(I, \mathbb{R})\times L^{\infty}(I,J)$ with $I=(0,2\pi)$ and $J=[\alpha, \beta]$, form indeed an optimal control process for the functional 
\[
F(x,u) :=\frac{1}{2}\int_{0}^{2\pi} u(t) \, x(t)\, dt 
\]
under the differential constraint
\[ 
x(t)+\ddot{x}(t)=u(t)
\]
and the isoperimetric one
\[
 \int_{0}^{2\pi}u(t)\, dt=L.
\]
We can therefore use Corollary \ref{Cor-PMP-02} to deduce that there exist $\lambda \geq 0$, $\mu \in \mathbb{R}$ and $s \in W^{1,1}(I,\mathbb{R})$ not all of them trivial such that
\begin{equation}\label{bang-for-Rho}
\rho_{K}(t)= \begin{cases}
\beta & \text{if } \;\; s(t)+\mu - \frac{\lambda}{2} \,p_{K}(t) > 0 \\
\alpha & \text{if } \;\; s(t)+\mu - \frac{\lambda}{2} \, p_{K}(t) < 0 
\end{cases}
\end{equation}

Moreover, from Corollary \ref{Cor-PMP-01}, the multiplier $s(t)$ solves the adjoint equation
\[
\ddot{s}(t)+s(t)=-\frac{\lambda}{2} \,\rho_{K}(t),
\]
that, together with the differential constraint written for the pair $(p_{K},\rho_{K})$, implies that the function $\beta:=s+\frac{\lambda}{2} p_{K}$ is a $2\pi$-periodic solution of the ordinary differential equation $y(t)+\ddot{y}(t)=0$. Therefore there exist constants $c_{1}$ and $c_{2}$, such that
\begin{equation}\label{EQ:adjoint-S}
s(t)+\frac{\lambda}{2} p_{K}(t)= c_1\cos(t)+c_2 \sin(t).
\end{equation}
We can therefore rewrite \eqref{bang-for-Rho} ,  as
\[
\rho_{K}(t)= \begin{cases}
\beta & \text{if } \;\; c_1\cos(t)+c_2 \sin(t) + \mu - \lambda \,p_{K}(t) > 0 \\
\alpha & \text{if } \;\;c_1\cos(t)+c_2 \sin(t) + \mu - \lambda \,p_{K}(t)< 0 
\end{cases}
\]

We claim that $\lambda \not=0$. If not, first we observe that by non-triviality condition of the Pontryagin principle, $\mu$ and $s(t)$ cannot be simultaneously identically zero. If $s=0$, then from \eqref{bang-for-Rho} we have 
\[
\rho_{K}(t)= \begin{cases}
\beta & \text{if } \;\; \mu  > 0 \\
\alpha & \text{if } \;\; \mu < 0 
\end{cases}.
\]
Being $\mu\not=0$, $K$ is a circle of radius $\alpha$ or $\beta$ that is not an admissible set. If instead $s \not=0$, the adjoint equation 	\eqref{EQ:adjoint-S} ensures us that $s(t)=c_1\cos(t)+c_2 \sin(t)= A\cos(t+\phi)$, with $A \not= 0$ and $\phi$ constant. The condition \eqref{bang-for-Rho} becomes
\[
\rho_{K}(t)= \begin{cases}
\beta & \text{if } \;\; A\cos(t+\phi) + \mu  > 0 \\
\alpha & \text{if } \;\; A\cos(t+\phi) - \mu < 0 
\end{cases}.
\]
Therefore, since the equation $A\cos(t+\phi) + \mu =0$ admits at most two solutions in the interval $[0,2\pi)$, it follows that $\partial K$ is the union of at most two arcs of circle with radii $\alpha$ and $\beta$. This is impossible for an $(\alpha,\beta)$-convex set by the regularity Lemma \ref{Regularity-B} (cf. Remark \ref{REM-consecArc}).  This proves the claim. 
 
Since  $\lambda>0$, in a translated coordinate system centered in $\left(\frac{c_{1}}{\lambda},\frac{c_{2}}{\lambda}\right)$, the parametric support function of $K$ will change in $p_{K}(t) -\frac{c_{1}}{\lambda} \cos(t) - \frac{c_{2}}{\lambda}\sin(t)$. Therefore \eqref{bang-for-Rho} will read
\[
\rho_{K}(t)= \begin{cases}
\beta & \text{if } \;\; \,p_{K}(t) <  \frac{\mu}{\lambda}=:\gamma\\
\alpha & \text{if } \;\;p_{K}(t) >  \frac{\mu}{\lambda} =:\gamma
\end{cases}
\]

We now prove that $S:=\{ t \in [0, 2\pi]\;:\; p_{K}(t) = \gamma\}$ is a finite set. Let $t_{0}\in [0,2\pi] \in S^{c}$, say $p_{K}(t_{0}) > \gamma$, and let $(a_{0},b_{0})$ be  the connected component of the set $S^{c}$, containing $t_{0}$ (more explicitly we can define $a_{0}= \inf\{ \tilde t \;:\; p_{K}(t) >\gamma \;\forall \, t\in (\tilde t, t_{0}) \}$ and $b_{0}= \sup\{ \tilde t \;:\; p_{K}(t) >\gamma \;\forall \, t\in (t_{0}, \tilde t) \}$). Observe that by continuity of $p_{K}$ we deduce that
 \begin{equation}\label{continuity_after_AH}
 p_{K}(a_{0})=p_{K}(b_{0})=\gamma.
 \end{equation}
Moreover we can uniquely solve the equation  $p_{K}+p_{K}''= \alpha$ in $(a_{0},b_{0})$, and therefore deduce the existence of  two constants $C_{1}$ and $C_{2}$ such that $p_{K}(t)=C_1\cos t+C_2\sin t +\alpha$ in $[a_{0},b_{0}]$.
We claim that $b_{0}$ is an isolated point for $S$. The same argument could be applied for the left endpoint $a_{0}$. By contradiction, let $\{t_{m}\}_{m\in \mathbb{N}}$ with $t_{m}> b_{0}$,  $p_{K}(t_{m})= \gamma$ and such that $t_{m} \to b_{0}^{+}$. The regularity of $p_{K}$ ensures that the left and right derivatives of $p_{K}$ in $b_{0}$ agree. We can write (recalling the explicit expression of $p_{K}$ in $(a_{0},b_{0})$)
\[
-C_{1}\sin b + C_{2}\cos b = p_{K}'(b-)=p_{K}'(b+)=\lim_{m\to \infty}\frac{p_{K}(t_m)-p_{K}(b)}{t_m-b}=0. 
\]
The last equality, together with \eqref{continuity_after_AH}, tells us that the couple $(C_{1},C_{2})$ solves the following linear system
\[
\begin{cases}
C_1\cos a_{0}+C_2\sin a_{0}  = \gamma -\alpha   \\ 
C_1\cos b_{0}+C_2\sin b_{0}  = \gamma -\alpha   \\
-C_{1}\sin b_{0} + C_{2}\cos b_{0} = 0 
\end{cases},
\]
that is solvable only if (imposing the determinant of the full matrix to be zero) 
\[
(\gamma- \alpha)(\cos(a_{0}-b_{0})-1)=0
\]
that in turns implies $\gamma=\alpha$ or $b_{0}-a_{0}=2\pi$. The last equality means that $x$ represent a full circle of radius $\alpha$, that (if we choose $L > 2\pi \alpha$) is not an admissible competitor for our problem.
It remains to study the case $\gamma=\alpha$, that leads easily to a contradiction by observing that the only solution od the linear system is $(C_{1},C_{2})=(0,0)$ and therefore $x(t)=\alpha=\gamma$ for any $t \in (a_{0},b_{0})$ against the definition of $S^{c}$ and this proves the claim. An analogous argument can be done when $p_{K}(t_{0})<\gamma$, with $\beta$ in place of $\alpha$. Since the connected components of $S^{c}$ have isolated endpoints, they are finite in number. Finally we have proved that $S^{c}$ is a finite union of disjoint  relatively open intervals in $[0,2\pi]$. Therefore its complement $S$ is a finite union of, possibly degenerate, relatively closed intervals in $[0,2\pi]$. With the same argument as above, it is easy to prove that the interior of $S$ is empty. Indeed it is sufficient to argue by contradiction and use the regularity of $p_{K}$ at the endpoints of the connected components of $S$ with non empty interior. 
\end{proof}

From now on, our technique is completely different from that one of \cite{Borisenko_2014}. Indeed we exploit equation (\ref{eq-optimality-for-K}) of Lemma
\ref{Lem-optimality-condition} in an analytical way.
In the next lemma we prove that for an optimal set, the arcs of radii $\alpha$ are congruent to each other, as well as 
the arcs of radii $\beta$.

\begin{lemma}
\label{Lem-N-gone}
Any $(\alpha,\beta)$-convex body $K\in \mathcal{K}^{2}$ that satisfies \eqref{eq-optimality-for-K} is necessarily an $(\alpha,\beta)$-regular $N$-gone.
\end{lemma}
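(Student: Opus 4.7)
The plan is to use Lemma \ref{Lem-optimality-condition} to decompose the boundary of $K$ into finitely many arcs of constant curvature, and then impose the $C^1$ matching of $p_K$ at the junctions to force all arcs of the same radius to be congruent. Since $S$ is finite, $S^c$ is a finite disjoint union of open intervals on each of which $p_K - \gamma$ keeps a constant sign. By Remark \ref{REM-consecArc} two consecutive arcs of $\partial K$ cannot share the same radius of curvature, so these intervals must alternate between $\alpha$-type (where $p_K > \gamma$ and $\rho_K = \alpha$) and $\beta$-type (where $p_K < \gamma$ and $\rho_K = \beta$); their total number is therefore even, say $2N$, and I write $\sigma_1,\dots,\sigma_N$ for the lengths of the $\alpha$-type intervals and $\tau_1,\dots,\tau_N$ for those of the $\beta$-type ones.

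On an $\alpha$-type interval $(t_{j-1},t_j)$ of length $\sigma$, the unique solution of $p_K + p_K'' = \alpha$ compatible with $p_K(t_{j-1}) = p_K(t_j) = \gamma$ and $p_K > \gamma$ in between has the form $p_K(t) = R\cos(t-\phi) + \alpha$ with $\phi$ forced to be the midpoint of the interval and
\[
R = \frac{\gamma-\alpha}{\cos(\sigma/2)} > 0, \qquad p_K'(t_{j-1}^+) = (\gamma-\alpha)\tan(\sigma/2), \qquad p_K'(t_j^-) = -(\gamma-\alpha)\tan(\sigma/2),
\]
which in particular forces $\gamma > \alpha$. A symmetric analysis on each $\beta$-type interval of length $\tau$, with the ansatz $p_K(t) = -S\cos(t-\psi) + \beta$, $\psi$ at the midpoint and $S = (\beta-\gamma)/\cos(\tau/2) > 0$, yields $p_K'(t_{j-1}^+) = -(\beta-\gamma)\tan(\tau/2)$ and $p_K'(t_j^-) = (\beta-\gamma)\tan(\tau/2)$, and forces $\gamma < \beta$.

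Finally, the $C^1$ regularity of $p_K$ provided by Lemma \ref{Regularity-A} gives at every junction between an $\alpha$-arc of length $\sigma$ and an adjacent $\beta$-arc of length $\tau$ the same matching identity
\[
(\gamma-\alpha)\tan(\sigma/2) = (\beta-\gamma)\tan(\tau/2),
\]
in both possible transition orientations. Since each $\alpha$-arc is flanked on both sides by $\beta$-arcs, applying this identity at the two endpoints of a fixed $\alpha$-arc shows that its two neighbouring $\beta$-arcs share the same value of $\tan(\tau/2)$; iterating around the circle forces $\tau_1 = \dots = \tau_N$, and the same identity then forces $\sigma_1 = \dots = \sigma_N$. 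Combined with the alternation of radii this is exactly the definition of an $(\alpha,\beta)$-regular $N$-gone in Example \ref{N-gone}. The step I expect to require the most care is the sign book-keeping in the second paragraph: one has to verify that the interior constraint $p_K > \gamma$ (resp.\ $p_K < \gamma$) pins the centre of the cosine at the midpoint of its interval with a strictly positive amplitude $R$ (resp.\ $S$), as this is what makes the two derivative formulae produce a single matching identity regardless of the orientation of the transition. Once this bookkeeping is settled, the propagation to equal arc lengths is purely algebraic.
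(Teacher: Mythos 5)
Your proposal is correct and follows essentially the same route as the paper: decompose $\partial K$ into finitely many alternating arcs via Lemma~\ref{Lem-optimality-condition} and Remark~\ref{REM-consecArc}, then propagate the $C^{1}$-matching identity $(\gamma-\alpha)\tan(\sigma/2)=(\beta-\gamma)\tan(\tau/2)$ around the boundary, concluding by the injectivity of $\tan(\cdot/2)$ (the paper phrases this as the strict monotonicity of $t\mapsto(1-\cos t)/\sin t$, which is the same function). The one point to tighten is your deduction of $\gamma>\alpha$ from $R>0$, which tacitly assumes $\cos(\sigma/2)>0$; this is easily repaired by noting that if $\cos(\sigma_j/2)\le 0$ for one $\alpha$-arc then $\gamma\le\alpha$ and hence $\sigma_j\ge\pi$ for every $\alpha$-arc, so their total angular measure would be at least $N\pi\ge 2\pi$, leaving no room for the $\beta$-arcs.
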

\begin{proof}
From Lemma \ref{Lem-optimality-condition}, we infer that $\partial K$ is the union of a finite number of arcs of circles with radii $\alpha$ and $\beta$, being the radius of curvature, $\rho_{K}$, a piecewise constant function with a finite number of jumps, assuming only two values. Moreover in any jump point $t\in [0,2\pi)$ of $\rho_{K}$, it holds $P_{K}(t)=\gamma$. By Remark \ref{REM-consecArc} we can easily deduce that the arcs are even in number and the radii alternate between the values $\alpha$  and $\beta$. In the case $\alpha=0$, we will have arcs of radius $\beta$, joining each other at corner points.

We can therefore assume that $\partial K$ is made of  $2N$ disjoint arcs. The parametric support function of $K$ can be written as
\[
p_{K}(t)=
 \left\{
 \begin{array}{ll}
   C_1^{2i+1} \cos t + C_2^{2i+1} \sin t + \beta \,,& t\in [t_{2i+1}, t_{2i+2}] \\ \\
   C_1^{2i+2} \cos t + C_2^{2i+2} \sin t + \alpha \,,& t\in [t_{2i+2}, t_{2i+3}]
 \end{array}
 \right.,  \;\; i\in{0,1,\dots,N-1}.
\]
with $ \{t_1<t_2<t_3 \cdots <t_{2N+1}=t_1+2\pi\}$ and $(C_1^{2i+1},C_2^{2i+1})$ are the coordinates of centers of the disks of radius $\beta$ and $(C_1^{2i+2},C_2^{2i+2})$ are the coordinates of centers of the disks of radius $\alpha$. Imposing the continuity in $t_{i}$ one gets
\begin{equation}\label{CIJ}
C_1^{2i+1}= (\gamma-\beta) \frac{\sin t_{2i+2}  - \sin t_{2i+1} }{ \sin (t_{2i+2}-t_{2i+1})   }
;\quad
C_2^{2i+1}= (\gamma-\beta)\frac{\cos t_{2i+2} -\cos t_{2i+1} }{\sin (t_{2i+1}-t_{2i+2}) }.
\end{equation}
   For $0\leq i \leq N-1$, one gets
\begin{equation}\label{CIJ'}
C_1^{2i+2}= (\gamma-\alpha) \frac{\sin t_{2i+3}  - \sin t_{2i+2} }{ \sin (t_{2i+3}-t_{2i+2})   }
;\quad
C_2^{2i+2}= (\gamma-\alpha)\frac{\cos t_{2i+3} -\cos t_{2i+2} }{\sin (t_{2i+2}-t_{2i+3}) }.
\end{equation}
From \eqref{CIJ} and \eqref{CIJ'} we easily deduce that $\gamma \not= \alpha $ and  $\gamma \not= \beta$, otherwise the arcs of the circles of radius $\alpha$ or $\beta$ contained in $\partial K$ should lie all on the same circle centered at the origin.

The continuity of the derivative of the parametric support function in $t_{j}$ ensures us that, for $0\le j \le 2N-1$,
\begin{equation}\label{systeme1}
  C_{1}^{j}\sin(t_{j+1})-C_{2}^{j}\cos(t_{j+1}) =C_{1}^{j+1}\sin(t_{j+1})-C_{2}^{j+1}\cos(t_{j+1}) 
\end{equation}

Combining the relations \eqref{CIJ}, \eqref{CIJ'} and \eqref{systeme1}, we can write, for $0\le i \le N-1$, 
\[
\left\{
\begin{array}{lllr}
\displaystyle (\gamma-\beta) \frac{1-\cos (t_{2i+2} -t_{2i+1}) }{ \sin (t_{2i+2} -t_{2i+1}) }&=&\displaystyle  (\gamma-\alpha)\frac{-1+\cos (t_{2i+3} -t_{2i+2}) }{ \sin (t_{2i+3} -t_{2i+2})} &
\\ \\
\displaystyle(\gamma-\alpha)\frac{1-\cos (t_{2i+3} -t_{2i+2}) }{ \sin (t_{2i+3} -t_{2i+2})   }  &=& \displaystyle  (\gamma-\beta) \frac{-1+\cos (t_{2i+4} -t_{2i+3}) }{ \sin (t_{2i+4} -t_{2i+3})}& \end{array}
\right. .
\]
Since the function $t \to \frac{1-\cos (t) }{ \sin (t) }$ is strictly monotone, from the previous system we infer the existence of two positive constants  $\tau$ and $\sigma$ such that $t_{2i+2}-t_{2i+1}=\tau$ and $t_{2i+3}-t_{2i+2}=\sigma$ for any $0\le i \le N-1$. This proves the claim.
\end{proof}

\begin{remark}
\label{REM:ValueLambda}
Let $K\in \mathcal{K}^{2}$ be an $(\alpha,\beta)$-regular $N$-gone with perimeter $P(K)=L$, as in the Example \ref{N-gone}. Suppose that up to a translation of $K$, the values of the parametric support functions in the points $t_{j}$ are constants, i.e. there exists $\lambda$, such that $\lambda_{j}=\lambda$ for any $j\in \{0,1,\dots,N-1\}$.  As a byproduct of the proof of the previous lemma, we can explicitly calculate the value of $\lambda=p_{K}(t_{j})$. Indeed, simply by imposing the continuity of the parametric support function in $t_{i}$ and solving the linear system, we get
\[
\lambda= \frac{\beta[1-\cos(\tau)]\sin(\sigma)+\alpha[1-\cos(\sigma)]\sin(\tau)}{[1-\cos(\tau)]\sin(\sigma)+[1-\cos(\sigma)]\sin(\tau)}.
\]
\end{remark}

\begin{lemma}
\label{Lem-best-N-gone}
Let $K\in \mathcal{K}^{2}$ be an $(\alpha,\beta)$-regular $N$-gone with $P(K)=L$ that satisfies \eqref{eq-optimality-for-K}
then
\begin{equation}
\label{eq-area-n-gone}
A(K)=\frac{\beta+\alpha}{2}(L-2\pi \alpha)+\pi \alpha^2
 +(\beta-\alpha)^2\frac{N\big(\cos(\frac{\pi}{N})-\cos(\frac{\pi(\beta+\alpha)-L}{N(\beta-\alpha)})\big)}{2\sin(\frac{\pi}{N})}.
\end{equation}
Moreover the minimum value for the area functional is realized for $N=2$, i.e. for the $(\alpha,\beta)$-egg.
\end{lemma}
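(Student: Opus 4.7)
The plan is to derive the area formula by direct integration and then show, by a calculus argument in $N$, that the resulting expression attains its minimum at $N=2$.

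For the area computation, the optimality condition \eqref{eq-optimality-for-K} combined with the $N$-fold symmetry of $K$ forces $p_K(t_j)=\gamma$ at every joining point, for a common constant $\gamma$. Using the formulas of Example~\ref{N-gone} and integrating directly on a single $\beta$-arc $[t_{2i+1},t_{2i+2}]$ of length $\sigma$ yields
\[
\int_{t_{2i+1}}^{t_{2i+2}} p_K(t)\,dt \;=\; \beta\sigma + 2(\gamma-\beta)\tan(\sigma/2),
\]
with the analogous identity on an $\alpha$-arc of length $\tau$. Matching first derivatives at each junction (legitimate because an $(\alpha,\beta)$-convex body has $C^{1,1}$ support function when $\alpha>0$; the case $\alpha=0$ is recovered by continuity) produces
\[
(\beta-\gamma)\tan(\sigma/2)\;=\;(\gamma-\alpha)\tan(\tau/2)\;=:\;Q,\qquad Q\;=\;\frac{(\beta-\alpha)\sin(\sigma/2)\sin(\tau/2)}{\sin((\sigma+\tau)/2)}.
\]
Substituting into $A(K)=\tfrac{1}{2}\int_0^{2\pi} p_K\rho_K\,dt$ from Proposition~\ref{Properties-02} and summing over the $2N$ arcs expresses $A(K)$ as $\tfrac{N(\beta^2\sigma+\alpha^2\tau)}{2}-N(\beta-\alpha)Q$. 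The first summand reduces to $\tfrac{\beta+\alpha}{2}(L-2\pi\alpha)+\pi\alpha^2$ via \eqref{EQ:sigma-tau}, while the second, after applying $2\sin(\sigma/2)\sin(\tau/2)=\cos((\tau-\sigma)/2)-\cos((\sigma+\tau)/2)$ together with $(\sigma+\tau)/2=\pi/N$ and $(\tau-\sigma)/2=(\pi(\beta+\alpha)-L)/(N(\beta-\alpha))$, matches exactly the last term of \eqref{eq-area-n-gone}.

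For the minimization, only the last summand of \eqref{eq-area-n-gone} depends on $N$. Setting $k:=(\pi(\beta+\alpha)-L)/(\pi(\beta-\alpha))\in(-1,1)$ and $\psi:=\pi/N\in(0,\pi/2]$, that summand equals $\tfrac{\pi(\beta-\alpha)^2}{2}\,g(\psi)$ where
\[
g(\psi)\;:=\;\frac{\cos\psi-\cos(k\psi)}{\psi\sin\psi}\;<\;0.
\]
I will show that $g$ is strictly decreasing on $(0,\pi/2]$, from which it follows that $A(K)$ is strictly increasing in $N$ on $[2,\infty)$ and hence minimized at $N=2$. Using $\cos\psi-\cos(k\psi)=-2\sin(m\psi)\sin(n\psi)$ and $\sin\psi=\sin(m\psi)\cos(n\psi)+\cos(m\psi)\sin(n\psi)$ with $m:=(1+k)/2$, $n:=(1-k)/2$, $m+n=1$, one rewrites
\[
-\frac{1}{g(\psi)}\;=\;\frac{(u+v)(\cot u+\cot v)}{2},\qquad u:=m\psi,\;v:=n\psi.
\]
Differentiating in $\psi$ and using $u/\psi=m$, $v/\psi=n$ gives
\[
\frac{d}{d\psi}\left[-\frac{1}{g}\right]\;=\;-\frac{1}{2}\bigl(f(u)+f(v)\bigr),\qquad f(x):=x\csc^2 x-\cot x\;=\;\frac{2x-\sin(2x)}{2\sin^2 x},
\]
and $f>0$ on $(0,\pi)$ because $\sin(2x)<2x$. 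Therefore $-1/g$ is strictly decreasing on $(0,\pi/2]$; since $g<0$, this is equivalent to $g$ being strictly decreasing.

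The main obstacle is establishing the monotonicity of $g$. The decisive trick is the substitution $u=m\psi$, $v=n\psi$ with $u+v=\psi$, which symmetrizes $-1/g$ as a function of $u$ and $v$ and reduces the monotonicity to the elementary positivity of the one-variable function $f$.
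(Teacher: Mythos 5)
Your argument is correct. The first half (the area formula) is essentially the paper's computation in a different packaging: where the paper writes out the centers $C_1^j,C_2^j$ explicitly and substitutes the common junction value $\lambda$ of $p_K$, you integrate $p_K$ over a single arc to get $\beta\sigma+2(\gamma-\beta)\tan(\sigma/2)$ and encode the $C^1$ junction condition in the single quantity $Q=(\beta-\gamma)\tan(\sigma/2)=(\gamma-\alpha)\tan(\tau/2)=(\beta-\alpha)\sin(\sigma/2)\sin(\tau/2)/\sin((\sigma+\tau)/2)$; both routes land on the same expression $\tfrac{N}{2}(\beta^2\sigma+\alpha^2\tau)-N(\beta-\alpha)Q$ and hence on \eqref{eq-area-n-gone}. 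The second half is where you genuinely diverge. The paper fixes the parameter $\omega$ and proves the pointwise inequality $\Phi(N,\omega)\le\Phi(2,\omega)=\sin\omega$ by showing $f_N(x)=\sin x\,\sin(\pi/N)-N\sin(\tfrac{\pi-x}{N})\sin(\tfrac{x}{N})\ge 0$, which requires two separate monotonicity steps (an auxiliary ratio $h_N$ increasing in $x$, then the discrete monotonicity $a_{N+1}\ge a_N$ to compare with $N=2$). You instead treat $N$ as continuous through $\psi=\pi/N$ and show directly that the $N$-dependent term $\tfrac{\pi(\beta-\alpha)^2}{2}g(\psi)$ is monotone, via the symmetrization $u=m\psi$, $v=n\psi$, $u+v=\psi$, which reduces everything to the positivity of $f(x)=x\csc^2x-\cot x$, i.e.\ to $\sin(2x)<2x$. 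This is a cleaner one-shot argument, it yields strict monotonicity of $A$ in $N$ (hence uniqueness of $N=2$, which is what the equality case of Theorem \ref{thm-main-inequality} needs), and it avoids the integer-induction step; the paper's version, on the other hand, isolates the comparison $\Phi(N,\omega)\le\Phi(2,\omega)$ as a self-contained inequality. All the intermediate identities you use ($k\in(-1,1)$, $g<0$, $u,v\in(0,\pi/2)\subset(0,\pi)$ so $f(u),f(v)>0$, and the sign bookkeeping $g=-1/G$ with $G>0$ decreasing) check out. One cosmetic remark: for $\alpha=0$ the support function is still $C^{1,1}$ by Lemma \ref{Regularity-A}, so the derivative matching at junctions holds directly and no limiting argument is needed.
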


\begin{proof}
Let $K\in \mathcal{K}^{2}$ be an $(\alpha,\beta)$-regular $N$-gone and using the same notation as in Example \ref{N-gone} for its parametric support function and radius of curvature, we can compute
\[
\begin{split}
 2A(K) = &  \sum_{i=1}^{N}\int_{t_{2i-1}}^{t_{2i}} \beta \big[C_1^{2i-1}\cos(t)+ C_2^{2i-1}\sin(t)+\beta\big]dt \\
       &+ \sum_{i=1}^{N}\int_{t_{2i}}^{t_{2i+1}} \alpha \big[C_1^{2i}\cos(t)+ C_2^{2i}\sin(t)+\alpha\big]dt \\
    = & \sum_{i=1}^{N} \bigg\{\beta \left[C_1^{2i-1}(\sin(t_{2i})-\sin(t_{2i-1}))+ C_2^{2i-1}(\cos(t_{2i-1})-\cos(t_{2i}))\right]  \\
       & +  \alpha \left[C_1^{2i}(\sin(t_{2i+1})-\sin(t_{2i}))+C_2^{2i}(\cos(t_{2i})-\cos(t_{2i+1}))\right]\bigg\} \\
       & +\beta^2(t_{2i}-t_{2i-1})+\alpha^2(t_{2i+1}-t_{2i}) .
       \end{split}
\]

Therefore, using \eqref{CIJ} and \eqref{CIJ'} in the previous formula and  recalling that $t_{2i}  - t_{2i+1}=\tau$ and $t_{2i+1}  - t_{2i}= \sigma$, we infer
\[
A(K) = \beta(\lambda-\beta)N \; \frac{\big[1-\cos(\tau)\big]}{\sin(\tau)} + \alpha(\lambda-\alpha) N \; \frac{\big[1-\cos(\sigma)\big]}{\sin(\sigma)}+ \frac{N}{2}(\beta^2 \tau +\alpha^2\sigma ).
\]
Finally, using the value of $\lambda$ given by Remark \ref{REM:ValueLambda}, we get 
\[
\lambda-\beta = (\alpha-\beta)\frac{1-\cos(\sigma)}{
\sin(\sigma)+\sin(\tau)-\sin(\sigma+\tau)} \sin(\tau)
\]
and
\[
\lambda-\alpha = (\beta-\alpha)\frac{1-\cos(\tau)}{
\sin(\sigma)+\sin(\tau)-\sin(\sigma+\tau)} \sin(\sigma).
\]
Therefore we can write 
\begin{equation}\label{cinquo}
A(K ) =  \frac{N}{2}(\beta^2 \tau +\alpha^2\sigma )-\frac{N(\beta-\alpha)^2\big[1-\cos(\sigma)\big]\big[1-\cos(\tau)]}{
\sin(\sigma)+\sin(\tau)-\sin(\sigma+\tau)}.
\end{equation}
Using the elementary relations
\[
\sin (a) + \sin (b) +\sin (c) - \sin (a+b+c) = 4 \sin \left(\frac{a+b}{2} \right)\sin \left(\frac{b+c}{2} \right) \sin \left(\frac{a+c}{ 2} \right)
\]
and
\[
\big(1-\cos(a)\big)\big(1-\cos(b)\big)=4\sin^2(a/2)\sin^2(b/2)
\]
from \eqref{cinquo}, we get
\[
A(K)= \frac{N}{2}(\beta^2 \tau +\alpha^2\sigma )-N(\beta-\alpha)^2 \frac{\sin \left(\frac{\sigma}{2}\right) \sin\left(\frac{\tau}{2}\right)}
{\sin\left(\frac{\sigma+\tau}{2}\right)}.
\]
To make explicit the dependence on $N$ in the expression of the area we introduce the auxiliary variable (cf. \eqref{EQ:sigma-tau}) 
\[ 
\omega:= N \frac{\sigma}{2} = \frac{L-2\pi\alpha}{2(\beta-\alpha)}.
\] 
Recalling that $N(\sigma+\tau)=2\pi$ we can finally write the area of the $(\alpha,\beta)$-regular $N$-gone $K$ as
\[
A(K)= \beta^{2} \pi - (\beta^2  +\alpha^2) \frac{\omega}{2} - (\beta- \alpha)^{2} \Phi (N,\omega),
\]
where we have set
\[
\Phi(N,\omega):= N \frac{\sin \left(\frac{\omega}{N}\right) \sin\left(\frac{\pi}{N}-\frac{\omega}{N}\right)}{\sin\left(\frac{\pi}{N}\right)}
\] 
And this proves \eqref{eq-area-n-gone}. 

To prove that the minimum value of the area is attained when $N=2$,  which corresponds to the area of the $(\alpha,\beta)-$egg, it is sufficient to prove that 
\[
\Phi(N,\omega) \leq \Phi(2,\omega) = \sin(\omega).
\]
To this aim, we observe that $\omega \in [0,\pi]$ and we show that the function
\[
f_N(x):=\sin (x) \sin\left(\frac{\pi}{N}\right)-N\sin\left(\frac{\pi-x}{N}\right)\sin\left(\frac{x}{N}\right)
\]
is positive for $x\in [0,\pi]$ and $N\geq 2$. We observe that $f_N(0)=f_N(\pi)=0$ and that $f_N(x)$ is symmetric with respect to $x_{s}=\frac{\pi}{2}$.
We claim that $f_N$ is  increasing in $[0,\frac{\pi}{2}]$. This will imply that $f_N$ is positive on $[0,\pi]$.
For that, we first observe that the function 
\[
h_N(x): =\frac{\sin(\frac{\pi-2x}{N})}{\sin(\frac{\pi-2x}{N+1})}
\] 
is  increasing  on $(0,\frac{\pi}{2})$. Indeed its derivative 
\[
h_N'(x)=-2\frac{\sin(\frac{\pi-2x}{N})}{N(N+1)\sin(\frac{\pi-2x}{N+1})}\left[(N+1)\cot(\frac{\pi-2x}{N})-N\cot(\frac{\pi-2x}{N+1})\right]\,,
\]
satisfies $h_N'(x)>0$ on $(0,\frac{\pi}{2})$ since $x\mapsto x\cot x$ is decreasing on $\left(0,\frac{\pi}{2}\right)$ and therefore 
\[
(N+1)\cot\left(\frac{\pi-2x}{N}\right)-N\cot\left(\frac{\pi-2x}{N+1}\right)<0.
\]
The monotonicity of $h_N$, implies that for $N\geq 2$ 
\[
\frac{\sin(\frac{\pi}{N+1})}{\sin(\frac{\pi-2x}{N+1})}\geq \frac{\sin(\frac{\pi}{N})}{\sin(\frac{\pi-2x}{N})}\,, 
\]
that is, for any $x\in \left(0, \frac{\pi}{2}\right)$, the sequence $\{a_{N}\}_{N=2}^{\infty}$
\[
a_{N}:= \frac{\sin(\frac{\pi}{N})}{\sin(\frac{\pi-2x}{N})}
\]
is monotone in $N$, and therefore, for $N\geq 2$, it holds $a_{N}\geq a_{2}$, that reads as 
\[
\frac{\sin(\frac{\pi}{N})}{\sin(\frac{\pi-2x}{N})}\geq \frac{\sin(\frac{\pi}{2})}{\sin(\frac{\pi-2x}{2})} = \frac{1}{\cos(x)}.
\]
The last inequality is equivalent to say that
\[
f_N'(x)=\cos x\sin(\frac{\pi}{N})-\sin(\frac{\pi-2x}{N})>0 \;,\;\; x \in  \left(0, \frac{\pi}{2}\right),
\]
proving the claim and the lemma.
\end{proof}

\section*{Acknowledgements}
The first and the second author were partially supported by the project ANR-18-CE40-0013 SHAPO financed
by the French Agence Nationale de la Recherche (ANR). The research of G.P. was partially supported by project Vain-Hopes within the program VALERE: VAnviteLli pEr la RicErca and by the INdAM-GNAMPA group. 

The authors are grateful to Antoine Henrot for some very useful discussions.

\bibliographystyle{plain}

\end{document}